\documentclass[english,12pt]{article}
\usepackage[english]{babel}
\usepackage[normalem]{ulem}
\usepackage{amssymb,amsmath,amsthm,mathtools,braket,aligned-overset}
\usepackage{thmtools,thm-restate}

\usepackage{graphicx,xcolor}
\usepackage{float}
\usepackage{caption,subcaption}
\usepackage{tikz}
\usetikzlibrary{calc,decorations.pathmorphing,decorations.text,decorations.markings,matrix,shadings,shapes.geometric}

\usepackage{array,booktabs}
\usepackage{enumerate,enumitem}

\usepackage{algorithm}
\usepackage{algpseudocode}

\usepackage{appendix}
\usepackage{titlesec}
\usepackage{authblk}

\usepackage{hyperref}
\usepackage[capitalise,nameinlink,noabbrev]{cleveref}

\usepackage[square,sort,comma,numbers]{natbib}
\usepackage{etoolbox}

\AtBeginEnvironment{thebibliography}{%
  \setlength{\parskip}{0pt}%
  \setlength{\itemsep}{0pt}%
}

\allowdisplaybreaks

\theoremstyle{plain}
\newtheorem{thm}{Thm}[section]

\newtheorem{claim}{Claim}
\newtheorem{theorem}[thm]{Theorem}
\newtheorem{lemma}[thm]{Lemma}

\newtheorem{proposition}[thm]{Proposition}

\newtheorem{problem}[thm]{Problem}

\newenvironment{proof*}[1][Proof]
{\begin{proof}[#1]}
{\end{proof}}

\allowdisplaybreaks[4]

\begin{document}

\title{A local clique density theorem in $H$-free graphs}

\date{}
\author{Jiaao Li$^1$}
\author{Xinyuan Li$^1$}
\author{Yan Wang$^2$}
\author{Zhouningxin Wang$^1$}

\affil{\small $^1$School of Mathematical Sciences and LPMC, Nankai University, Tianjin 300071, China \linebreak
$^2$School of Mathematical Sciences, Shanghai Jiao Tong University, Shanghai 200240, China \linebreak Emails: lijiaao@nankai.edu.cn; xinyuanli@mail.nankai.edu.cn; yan.w@sjtu.edu.cn; wangzhou@nankai.edu.cn}

\maketitle

\begin{abstract}
In 2016, Reiher's clique density theorem
determined the minimum number of copies of $K_t$ in a graph with a prescribed edge density. In this paper, we investigate its local version and prove a local clique density theorem in $H$-free graphs as follows. For integers $r$ and $t$ with $2\leq t\leq r-1$, any $r$-chromatic graph $H$, any real numbers $\gamma$ and $\alpha$ with $\frac{t-2}{2(t-1)}\leq\gamma\leq \frac{r-2}{2(r-1)}$ and $0\leq\alpha\leq 1$, we determine the maximum value $\beta:=\beta(r,t,\alpha,\gamma)$ such that for every $n$-vertex $H$-free graph $G$ with at least $\gamma n^2$ edges, every $\lceil\alpha n\rceil$-vertex subset in $G$ contains at least $(\beta-o(1))n^{t}$ copies of $K_t$. In particular, when $H=K_r$, every $\lceil\alpha n\rceil$-vertex subset contains at least $\lfloor\beta n^t\rfloor$ copies of $K_t$, which is an exact bound. For suitable choices of $\alpha$ and $\gamma$, namely, those for which all part ratios in the corresponding extremal construction are rational, this bound is attained for infinitely many values of $n$.
\end{abstract}

\noindent \textbf{Keywords:}
$H$-free graphs, local clique density, Tur\'{a}n-type problem, symmetrization

\section{Introduction}
\label{sec:intro}

\subsection{Background and motivation on clique density}
Counting cliques or edges in graphs is a fundamental problem in graph theory. In 2016, Reiher~\cite{2016-reiher} counted the number of cliques in a graph with a prescribed lower bound on its number of edges.
\begin{theorem}[The clique density theorem~\cite{2016-reiher}]\label{thm-reiher's clique density thm}
    Let $t$ be an integer with $t\geq3$ and let $\gamma$ be a real number with $\gamma\in[0,\frac{1}{2})$. Every $n$-vertex graph with at least $\gamma n^2$ edges contains at least 
    \[
    \binom{s}{t}\ell^{t}n^t+\binom{s}{t-1}\ell^{t-1}(1-s\ell)n^t
    \]
    copies of $K_t$, where $s\geq 1$ is an integer such that $\gamma\in[\frac{s-1}{2s},\frac{s}{2(s+1)}]$ and $\ell\in [\frac{1}{s+1},\frac{1}{s}]$ is implicitly determined by $\gamma=\frac{s-1}{2s}(s\ell)^2+s\ell(1-s\ell)$.
\end{theorem}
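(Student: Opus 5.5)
Reiher's clique density theorem is a deep result, and I would organize the argument along the lines of Reiher's original strategy rather than attempt a symmetrization shortcut. Symmetrization fails here because cliques are not monotone under Zykov-type operations once edge density is prescribed only from below. The first step is to pass to the limit setting, via graphons or flag algebras. For each fixed $\gamma$ the extremal count is a continuous, piecewise-defined function $g_t(\gamma)$. The claimed bound equals the $K_t$-density of the complete $(s+1)$-partite graph with $s$ parts of size $\ell n$ and one smaller part of size $(1-s\ell)n$. It therefore suffices to show that every graphon $W$ with edge density $2\gamma$ has $K_t$-density at least $t!\,g_t(\gamma)$. One then recovers the finite statement by a standard removal/compactness argument, using the fact that the bound is convex-ish and continuous in $\gamma$.

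The core is an induction on $t$, with Razborov's triangle case and Nikiforov's $K_4$ case as models. I would consider a hypothetical counterexample minimizing a suitable potential. Concretely, fix $\gamma$ and take a graph $G$ (or graphon) minimizing $k_t(G)-\lambda\, e(G)$ for the Lagrange multiplier $\lambda$ matching the slope of the extremal curve at $\gamma$. The key tool is the ``clique-extension'' inequalities. For each $(t-1)$-clique $Q$ let $d(Q)$ be its number of common neighbours. Then $t\,k_t=\sum_Q d(Q)$, and by minimality every vertex $v$ satisfies the weighted local condition $k_{t-1}(N(v))-\lambda\deg(v)\approx$ constant. Combining these vertex conditions with the inductive hypothesis applied inside the neighbourhoods $N(v)$ (which have density controlled via $\deg(v)$) gives a family of inequalities relating $k_t,k_{t-1},e$.

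Reiher's decisive idea, which I expect to be the main obstacle, is to sum these local inequalities not over vertices but over $(t-2)$-cliques. One weights by appropriately chosen polynomials in the degrees and exploits a Cauchy--Schwarz/convexity step that is tight exactly on the $(s+1)$-partite construction. Finding the right weighting, essentially a clever identity involving $\sum_{Q\in K_{t-1}} d(Q)^2$ and $\sum_{R\in K_{t-2}}$ of codegree products, is where all difficulty lies. I would first try to derive it for $t=3$ (reproving Razborov) and then guess the general form by matching equality cases.

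Finally, I would verify that the resulting inequality, together with the induction hypothesis for $k_{t-1}$ as a function of $\gamma$, forces $k_t\ge g_t(\gamma)$ on each interval $[\frac{s-1}{2s},\frac{s}{2(s+1)}]$. This step is a one-variable calculus check in $\ell$. For this check I would use the parametrization $\gamma=\frac{s-1}{2s}(s\ell)^2+s\ell(1-s\ell)$.
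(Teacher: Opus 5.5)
The paper does not prove this statement. It quotes it from Reiher and uses it only as a black box, applying it to $G[U]$ to lift the case $t=2$ of \cref{thm-inverse problem of local density} to general $t$. Judged on its own, your proposal has a genuine gap at its centre: the inequality that actually forces $k_t(G)\ge g_t(\gamma)n^t$ is never stated, let alone proved. You say yourself that finding the right weighting ``is where all difficulty lies'' and plan to guess it from $t=3$ and the equality cases. But the passage to graphons, the first-order vertex conditions and the closing one-variable check in $\ell$ are standard scaffolding. The whole content of the theorem, what separates it from the older Bollob\'as and Lov\'asz--Simonovits bounds, is exactly this missing averaging inequality.

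The one step you do make concrete cannot work. You minimize the unconstrained potential $k_t(G)/n^t-\lambda e(G)/n^2$ over all graphs or graphons. Any such Lagrangian argument can only certify the lower convex envelope of the extremal function. But $g_t$ is concave on each interval $[\frac{s-1}{2s},\frac{s}{2(s+1)}]$. Along the extremal family one computes $dg_t/d\gamma=\binom{s-1}{t-2}\ell^{t-2}$, which strictly decreases as $\gamma$ increases once $s\ge t-1$. Take $\gamma$ interior to such an interval and $\lambda=g_t'(\gamma)$. Then $T_s(n)$ with $s\mid n$ has potential $g_t(\frac{s-1}{2s})-\lambda\frac{s-1}{2s}$, which is strictly below $g_t(\gamma)-\lambda\gamma$ because the tangent line lies strictly above the curve. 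So the inequality your contradiction would need is simply false, and the local conditions at a minimizer say nothing about graphs of edge density $\gamma$.

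Your opening remark also has the obstruction backwards: Zykov symmetrization works perfectly for this potential. For non-adjacent $u,v$ in a vertex-weighted graph, no edge or $t$-clique contains both, so the potential is affine along $w_u+w_v=\mathrm{const}$. All weight can be shifted onto one of them, which drives minimizers to complete multipartite graphs and yields only Bollob\'as's piecewise-linear bound through the Tur\'an points. The real obstruction is non-convexity. The argument must therefore start from an extremal configuration for the constrained problem at edge density exactly $\gamma$. There one only gets stationarity conditions $k_{t-1}(N(v))-\lambda\deg(v)=c$ on the support, with multipliers not known in advance, and the averaging inequality itself must overcome the non-convexity.

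Two smaller points. First, the induction hypothesis inside $N(v)$ bounds $k_{t-1}(G[N(v)])$ in terms of $e(G[N(v)])$, and $\deg(v)$ does not control that quantity. Second, no removal or compactness step is needed, and compactness would only give an $o(n^t)$ loss while the statement is exact. A graphon inequality applied to the step graphon $W_G$ of a loopless $G$ gives the finite bound directly, via $t(K_t,W_G)=t!\,k_t(G)/n^t$, $t(K_2,W_G)=2e(G)/n^2$ and the monotonicity of $g_t$.
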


Recently, the asymptotically sharp clique-to-clique density function was determined independently by Ma, Wang, and Zhu~\cite{MWZ2026}, and by Li, Liu, and Zhang~\cite{LLZ2026}: For every $2\le s<t$, they obtained the minimum possible number of $K_t$'s in a graph with a prescribed number of $K_s$'s. Their theorems extend Reiher's clique density theorem and give a complete global answer to the $K_s$-to-$K_t$ density problem. 

It is natural to consider this problem from a local perspective: Instead of estimating the total number of cliques in the whole graph, we ask for the minimum number of copies of $K_t$ contained in every $\alpha n$-vertex subset of a graph with a prescribed edge density. However, for general graphs, such a local version is essentially a consequence of the global clique density theorem and does not lead to new extremal phenomena. To see this, let $e_{t}(G)$ be the number of $K_t$'s in $G$. Assume that $G$ is an $n$-vertex graph with $\gamma n^2$ edges. For any subset $U\subseteq V(G)$ with $|U|=\alpha n$, the number of edges not contained in $U$ is at most $\binom{(1-\alpha)n}{2}+\alpha(1-\alpha)n^2$, and thus, the set $U$ spans at least $\gamma n^2-\binom{(1-\alpha)n}{2}-\alpha(1-\alpha)n^2$ edges. Then applying Reiher's clique density theorem, we can immediately obtain a lower bound on the number of copies of $K_t$ in the subgraph induced by $U$. Moreover, this bound is tight. Indeed, let $F$ be an $\alpha n$-vertex graph with $\gamma n^2-\binom{(1-\alpha)n}{2}-\alpha(1-\alpha)n^2$ edges and containing such a number of copies of $K_t$. For any graph $G$ that contains $F$ as an induced subgraph and contains all the other edges, every $\alpha n$-vertex subset of $G$ contains at least $e_{t}(F)$ copies of $K_{t}$ and the vertex subset $V(F)$ reaches this bound. Therefore, the local version of the clique density theorem for arbitrary graphs follows directly from the global one. This observation motivates us to consider the local density problem under additional structural constraints, in particular for $K_r$-free graphs.

\begin{problem}\label{prob-local density-inverse problem}
    Let $r$ and $t$ be integers with $2\leq t\leq r-1$, and let $\alpha, \gamma$ be real numbers with $0\leq\alpha\leq 1$ and $0\leq\gamma\leq\frac{r-2}{2(r-1)}$. Determine the largest $\beta:=\beta(r,t,\alpha,\gamma)$ such that, in every $n$-vertex $K_{r}$-free graph with at least $\gamma n^{2}$ edges, every $\lceil\alpha n\rceil$-vertex subset spans at least $\lfloor\beta n^{t}\rfloor$ copies of $K_t$. 
\end{problem}

Equivalently, \cref{prob-local density-inverse problem} aims to seek the maximum $\beta\neq 0$ such that if an $n$-vertex $K_{r}$-free graph $G$ contains a set of $\alpha n$ vertices spanning at most $\beta n^{t}$ copies of $K_t$, then $e(G)\leq \gamma n^{2}$.

In this paper, we resolve~\cref{prob-local density-inverse problem} completely.
The solution of~\cref{prob-local density-inverse problem} provides an alternative approach to finding sparse subsets in a $K_{r}$-free graph. To obtain a sparse subset of a desired size, it suffices first to locate a sparse subset whose size is substantially smaller than the target. This relaxation then leaves a larger remaining subgraph, thereby facilitating the discovery of other sparse subsets.

\medskip
Finally, we remark that \cref{prob-local density-inverse problem} is closely related to the local density problem proposed by Erd\H{o}s \cite{1975-erdos}. Indeed, \cref{prob-local density-inverse problem} asks to determine 
the maximum possible number of edges of the host graph when the size of a sparse subset is given, or equivalently, to determine the minimum number of edges (or the minimum number of copies of $K_t$) in every subset of the given size under the assumption that the host graph is dense. Therefore, a solution to \cref{prob-local density-inverse problem} provides a sufficient condition for the existence of sparse induced subgraphs, which is one of the central questions in the local density problem.

\begin{problem}[Erd\H{o}s' local density problem~\cite{1975-erdos}]\label{prob-erdos local density problem}
    Let $r$ be an integer with $r\geq 3$ and let $\alpha$ be a real number with $0\leq\alpha\leq 1$. Determine the minimum value $\theta:=\theta(r,\alpha)$ such that every $n$-vertex $K_{r}$-free graph contains a set of $\lfloor\alpha n \rfloor$ vertices spanning at most $\lceil\theta n^{2}\rceil$ edges.
\end{problem}
This problem generalizes the celebrated Tur\'an theorem; the case $\alpha=1$ is precisely Tur\'an's theorem. Many results toward this problem have been obtained; see \cite{1994-erdos, 1995-krivelevich, 1998-brandt, 1990-chung, 2003-sudakov, 2006-sudakov, 2015-norin, 2019-reiher, 2022-ma, 2021-razborov, 2023-reiher, 2024-balogh} for references.

The connection between the local density problem and its inverse counterpart can already be seen in the triangle-free case. If a triangle-free graph $G$ contains a large independent set, then the number of edges of $G$ cannot be too large. More precisely, we have the following warm-up proposition, which can be viewed as a basic example of the inverse local density phenomenon.
\begin{proposition}\label{prop-simple K3-free case}
    Assume that $G$ is an $n$-vertex $K_3$-free graph with an independent set $I$ of size at least $\frac{n}{2}$. Then $e(G)\leq |I|(n-|I|)$, with equality holding if and only if $G$ is isomorphic to the complete bipartite graph $K_{2}[|I|,n-|I|]$.
\end{proposition}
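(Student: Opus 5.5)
Write $k=|I|$ and $J=V(G)\setminus I$, so $|J|=n-k\le k$ because $k\ge n/2$. The plan is to charge every edge of $G$ to a vertex of $J$ and show that each vertex of $J$ carries at most $k$ edges. Since $I$ is independent, every edge of $G$ lies either inside $J$ or between $I$ and $J$. Summing degrees over $J$ counts each $I$--$J$ edge once and each edge inside $J$ twice, so
\[
e(G)=e(I,J)+e(J)=\sum_{v\in J}d(v)-e(J)\le \sum_{v\in J}d(v).
\]
It therefore suffices to show $d(v)\le k$ for every $v\in J$. This then gives $e(G)\le (n-k)k=|I|(n-|I|)$.

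We do not prove the degree bound vertex by vertex. Instead, for an edge $uv$ inside $J$, triangle-freeness says that $u$ and $v$ have no common neighbour, so $d(u)+d(v)\le n$. Pairing degrees in $J$ this way is messy, so we use a cleaner charging. Write
\[
e(G)=e(I,J)+e(J)\le \sum_{v\in J}d_I(v)+e(J).
\]
Each vertex $v\in J$ has $d_I(v)\le k$, and $N_I(u)\cap N_I(v)=\emptyset$ whenever $uv\in E(G[J])$. Hence for each edge $uv$ of $G[J]$,
\[
d_I(u)+d_I(v)\le k.
\]
Let $M$ be a maximum matching of $G[J]$ and let $C=V(M)$, so $|C|=2|M|$. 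The graph $G[J]$ is triangle-free on $|J|$ vertices, so by Mantel's theorem
\[
e(J)\le \tfrac{|J|^2}{4}.
\]
We want to bound $\sum_{v\in J} d_I(v)+e(J)$ in terms of $k$ and $|J|$. The vertices outside $C$ contribute at most $k$ each. For each matched pair $\{u,v\}\in M$,
\[
d_I(u)+d_I(v)\le k,
\]
which leaves a deficit of $k$ relative to $2k$. So
\[
\sum_{v\in J} d_I(v)\le k|J|-k|M|,
\]
and we need $e(J)\le k|M|$.

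To get $e(J)\le k|M|$, recall that $V(M)$ is a vertex cover of $G[J]$ by maximality of $M$. Every edge of $G[J]$ therefore has an endpoint in $C$. Moreover, for each $\{u,v\}\in M$, triangle-freeness gives $N(u)\cap N(v)=\emptyset$, so
\[
d_J(u)+d_J(v)\le |J|\le k.
\]
Therefore
\[
e(J)\le \sum_{w\in C}d_J(w)\le |M|\cdot|J|\le k|M|.
\]
This is exactly where the hypothesis $|I|\ge n/2$ enters, and it is the step I expect to be the crux.

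Combining the two bounds gives $e(G)\le k|J|=|I|(n-|I|)$. For equality, every inequality must be tight, so it remains to show that tightness forces $M=\emptyset$. If $M\neq\emptyset$, equality would require $|J|=k$ and that every edge of $G[J]$ is counted exactly once in $\sum_{w\in C}d_J(w)$. The latter means $C$ is independent in $G[J]$, which contradicts $M\subseteq E(G[J])$. Hence $e(J)=0$. Equality then forces $d_I(v)=k$ for all $v\in J$, so $G=K_2[|I|,n-|I|]$. Conversely, this complete bipartite graph is triangle-free and attains the bound.
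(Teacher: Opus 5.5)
Your argument is correct. The paper states this proposition as a warm-up and gives no proof of it, so your route is necessarily different from anything in the text.

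\textbf{How the paper would obtain it.} The inequality is the case $r=3$, $s=2$ of Theorem~\ref{thm-upper bound on Ks-independence number}. There $d(2)=\sqrt{\frac14-\gamma}$, so with $\gamma=e(G)/n^2$ and $|I|=an$ the theorem gives $a\le\frac12+\sqrt{\frac14-\gamma}$. Squaring is legitimate exactly because $a\ge\frac12$, and it yields $\gamma\le a(1-a)$. That theorem is proved by passing to an edge-maximal graph in the relevant family and applying $Sym_{r,s}$ followed by Zykov symmetrization. This reaches a complete $(r-1)$-partite graph whose part sizes are then optimized. The machinery is uniform in $r$ and $s$, but it only exhibits \emph{an} extremal graph. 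It says nothing about uniqueness unless you also track when each symmetrization step strictly gains edges.

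\textbf{What your approach buys.} Your charging argument is elementary and self-contained. Each edge of a maximum matching $M$ of $G[J]$ costs $k$ cross edges, since $d_I(u)+d_I(v)\le k$. The vertex cover $V(M)$, together with $d_J(u)+d_J(v)\le|J|\le k$, gives $e(J)\le k|M|$. This isolates exactly where $|I|\ge n/2$ is used, namely $|J|\le k$. It also delivers the equality case directly: matching edges are double-counted in $\sum_{w\in C}d_J(w)$, which forces $M=\emptyset$.

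\textbf{Clean-up.}
\begin{itemize}
\item Delete the opening paragraph. The claim that it ``suffices'' to show $d(v)\le k$ for all $v\in J$ points to a statement that is false in general. Take $I=\{a,b\}$, $J=\{u,v\}$ and edges $ua,ub,uv$. This graph is triangle-free with $|I|=n/2$, yet $d(u)=3>k$. Leaving that paragraph in suggests a lemma you never prove and cannot prove.
\item The Mantel bound $e(J)\le|J|^2/4$ is never used and can go.
\item $e(G)=\sum_{v\in J}d_I(v)+e(J)$ is an identity rather than an inequality. Stating it as such makes the equality analysis slightly cleaner.
\end{itemize}
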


\subsection{Our results}\label{sec:contribution}
In this paper, we completely resolve \cref{prob-local density-inverse problem} for $K_r$-free graphs by determining $\beta(r,t,\alpha,\gamma)$ for all admissible values of the parameters. Moreover, we obtain an asymptotic extension to $H$-free graphs for every $r$-chromatic graph $H$.

Given an integer $r$ with $r\geq 3$ and a real number $\gamma$ with $0\leq\gamma\leq \frac{r-2}{2(r-1)}$, for each integer $s$ with $2\leq s\leq r$, let 
\[d_{r,\gamma}(s):=\sqrt{\frac{2(s-1)(r-s)}{r-1}\left(\frac{r-2}{2(r-1)}-\gamma \right)}.\]
We simply write $d(s)$ for $d_{r,\gamma}(s)$ when no confusion can arise.

Our first result gives an upper bound on the \emph{$K_{s}$-independence number}, defined as the maximum order of an induced $K_{s}$-free subgraph, of a $K_{r}$-free graph with at least $\gamma n^{2}$ edges.
\begin{theorem}\label{thm-upper bound on Ks-independence number}
    Let $r$ and $s$ be integers with $r\geq 3$ and $2\leq s\leq r$, and let $\gamma$ be a real number such that $0\leq\gamma\leq\frac{r-2}{2(r-1)}$.  
    If $G$ is an $n$-vertex $K_{r}$-free graph with at least $\gamma n^{2}$ edges, then the $K_{s}$-independence number of $G$ is at most $(\frac{s-1}{r-1}+d(s))n$.
\end{theorem}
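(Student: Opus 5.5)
The plan is to reduce to complete multipartite graphs by Zykov symmetrization and then solve a finite optimisation problem. Suppose $G$ is an $n$-vertex $K_r$-free graph with at least $\gamma n^2$ edges, and $U\subseteq V(G)$ induces a $K_s$-free subgraph with $|U|=xn$. We may assume $x>\frac{s-1}{r-1}$, since otherwise there is nothing to prove. Set $W=V(G)\setminus U$.

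\textbf{Step 1: symmetrization.} Among all $n$-vertex $K_r$-free graphs containing a set of exactly $xn$ vertices that induces a $K_s$-free subgraph, choose a pair $(G,U)$ maximising $e(G)$. Then refine the choice, for instance by the number of twin classes. I will symmetrize only inside each class. Take nonadjacent $u,v$ both in $U$, or both in $W$, with $d(u)\ge d(v)$, and replace $v$ by a nonadjacent twin of $u$.
\begin{itemize}
\item This keeps $G$ $K_r$-free, since a clique uses at most one of two nonadjacent twins.
\item It keeps $G[U]$ $K_s$-free for the same reason: when both vertices lie in $U$, the new vertex is a twin inside $U$; when both lie in $W$, the graph $G[U]$ is untouched.
\item It does not decrease $e(G)$.
\end{itemize}
Running the standard Zykov argument (the three-vertex configuration $v$ nonadjacent to $u,w$ with $u\sim w$) inside $U$ and inside $W$ shows that nonadjacency is an equivalence relation on $U$ and on $W$ separately. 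Hence $G[U]$ and $G[W]$ are complete multipartite.

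\textbf{Step 2: gluing the two sides (main obstacle).} The hardest step is to show that the pieces glue into a single complete multipartite graph on $V(G)$, whose parts $V_1,\dots,V_{r-1}$ (some possibly empty) each meet $U$ and $W$ in a union of at most one class from each side. For a part $A$ of $G[U]$ and a part $B$ of $G[W]$, I would first try to show that the bipartite graph between $A$ and $B$ is complete or empty. The idea is a cross-class switching argument: if some $a\in A$ has more neighbours than $a'\in A$, then $a'$ can be cloned to $a$ inside $U$, which is legal. After symmetrization, all vertices in a class therefore have the same neighbourhood. Then, by edge-maximality, a part $A$ that is entirely nonadjacent to a part $B$ can be merged with it without creating a $K_r$. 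This is because two vertex-disjoint cliques sharing no edges between $A$ and $B$ can be reorganised. If this combinatorial gluing becomes too messy, the fallback is to encode $G$ by its weighted ``type'' graph and apply a Motzkin--Straus/Lagrangian argument, which yields the same extremal structure. In the final structure, $U$ is contained in at most $s-1$ of the parts $V_i$, because $G[U]$ is $K_s$-free and contains a vertex from each part it meets.

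\textbf{Step 3: optimisation.} Now $e(G)=\frac12\bigl(n^2-\sum_i|V_i|^2\bigr)$. At most $s-1$ parts carry total size at least $xn$, and the remaining at most $r-s$ parts carry the rest. By convexity, $\sum|V_i|^2$ is minimised with $s-1$ equal parts of size $\frac{x}{s-1}n$ and $r-s$ equal parts of size $\frac{1-x}{r-s}n$. Here I use $x\ge\frac{s-1}{r-1}$, so that moving mass into the $U$-parts is forced rather than optional. This gives
\[
\gamma\le\frac12\Bigl(1-\frac{x^2}{s-1}-\frac{(1-x)^2}{r-s}\Bigr).
\]
Write $x=\frac{s-1}{r-1}+y$. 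A direct computation turns the right-hand side into $\frac{r-2}{2(r-1)}-\frac{(r-1)y^2}{2(s-1)(r-s)}$. Hence $y^2\le \frac{2(s-1)(r-s)}{r-1}\bigl(\frac{r-2}{2(r-1)}-\gamma\bigr)=d(s)^2$, which is exactly the claimed bound $x\le \frac{s-1}{r-1}+d(s)$.

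The boundary case $s=r$ has $d(r)=0$. There the bound reads $x\le1$, which is trivial and is consistent with the formula.
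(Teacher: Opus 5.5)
\textbf{There is a genuine gap in Step 2.} Steps 1 and 3 are fine. Step 3 also handles the strict direction cleanly by parametrising with the actual $x=|U|/n$. But Step 2 is where the content of the theorem lies, and it is not proved.

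After Step 1 you only know that $G[U]$ and $G[W]$ are \emph{separately} complete multipartite, with parts that are twin classes. Your justification for merging (``without creating a $K_r$'', ``cliques can be reorganised'') addresses the wrong constraint. Replacing vertices by nonadjacent twins never creates a $K_r$, so that part is automatic. What is at risk is the $K_s$-freeness of $G[U]$.

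Take a $U$-part $A$ and a nonadjacent $W$-part $B$ with $d(B)>d(A)$. Edge-maximality then pushes you to give the vertices of $A$ the neighbourhood of $B$. That is a cross-side move of $U$-vertices, exactly what you excluded in Step 1, and you give no reason why it is now harmless.

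Pairwise merging is also not enough for a global complete $(r-1)$-partite structure. You must rule out a class being nonadjacent to two mutually adjacent classes on the other side, for example a $W$-part missing two $U$-parts. That is the cross-side instance of Zykov's three-vertex configuration, which your Step 1 could not handle. The Motzkin--Straus fallback hits the same wall: shifting weight between a nonadjacent $U$-type and $W$-type changes $|U|$, the very quantity you are constraining.

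\textbf{The missing observation.} Once every part of $G[U]$ is a twin class, cross-side moves become legal. If $a\in U$ and $w\not\sim a$, then $w$ is nonadjacent to the whole part of $a$. So $N(w)\cap U$ meets at most $s-2$ parts of $G[U]$. Hence giving $a$ the neighbourhood $N(w)$ keeps $G[U]$ $K_s$-free. The same holds for replacing two adjacent vertices by twins of a common non-neighbour $v$, since $N(v)\cap U$ then misses both of their parts. With this, the standard three-vertex Zykov argument runs on all of $V(G)$. So $G$ is complete multipartite with at most $r-1$ parts, and $U$ meets at most $s-1$ of them because $G[U]$ is $K_s$-free.

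\textbf{The paper's route.} The paper reaches the same structure differently. It first applies $Sym_{r,s}(G,F)$ so that the parts of the $K_s$-free set become twin classes in the whole graph. It then runs the full Zykov symmetrization $Sym(G')$. At each step a twin class is either contained in $N(u_i)$ or disjoint from it, so it is never split. Each part of $F$ therefore lands inside a single part of the final complete $(r-1)$-partite graph.
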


To state the solution to~\cref{prob-local density-inverse problem} in its exact form, we first introduce the extremal examples, which clarify how $\beta$ is obtained.
The constants $r,s,$ and $\gamma$ are defined as above. Let $t$ be an integer with $2\leq t\leq r-1$ and let $\alpha$ be a real number with $0\leq\alpha\leq 1$. We distinguish the following three extremal cases based on the range of $\alpha$.

\begin{enumerate}
    \item For $0\leq \alpha\leq \frac{t-1}{r-1}+d(t)$, we consider the complete $(r-1)$-partite graph with its $t-1$ parts of size $\frac{n}{r-1}+\frac{d(t)}{t-1}n$ and the remaining $r-t$ parts of size $\frac{1}{r-t}(\frac{r-t}{r-1}-d(t))n$. Clearly, this $(r-1)$-partite graph contains $\alpha n$ vertices (contained in the union of the largest $t-1$ parts) spanning no $K_{t}$. Hence, the corresponding $\beta$ equals $0$.

    \item For $t\leq s\leq r-1$ and $\frac{s-1}{r-1}+d(s)<\alpha\leq \frac{s}{r-1}+\frac{r-s-1}{r-s}d(s)$, we consider the complete $(r-1)$-partite graph with $s-1$ parts having size $\frac{n}{r-1}+\frac{d(s)}{s-1}n$ and the remaining $r-s$ parts having size $\frac{1}{r-s}(\frac{r-s}{r-1}-d(s))n$. The sparsest $\alpha n$-vertex subset is the union of all the largest $s-1$ parts together with a $(\alpha-\frac{s-1}{r-1}-d(s))n$-vertex subset of one of the remaining parts. Such a (sparsest) $\alpha n$-vertex subset induces 
{\small \[
\binom{s-1}{t}\left(\frac{1}{r-1}+\frac{d(s)}{s-1}\right)^t n^t+\binom{s-1}{t-1}\left(\frac{1}{r-1}+\frac{d(s)}{s-1}\right)^{t-1}\left(\alpha-\frac{s-1}{r-1}-d(s)\right)n^t
\]}
copies of $K_t$. By convention, we define $\binom{s-1}{t} = 0$ when $t = s$.

\item For $t\leq s\leq r-2$ and $\frac{s}{r-1}+\frac{r-s-1}{r-s}d(s)<\alpha\leq \frac{s}{r-1}+d(s+1)$, let $L$ be the unique solution of the function
\begin{equation}\label{equ:L}
\binom{s-1}{2}L^2+(s-1)L(\alpha-(s-1)L)=\gamma-\alpha(1-\alpha)-\frac{r-s-2}{2(r-s-1)}(1-\alpha)^2
\end{equation}
within the range
\begin{equation}\label{equ:L2}
\frac{s-1}{s}\left(\frac{s}{r-1}+d(s+1)\right)\leq (s-1)L\leq \frac{s-1}{r-1}+d(s).
\end{equation}
Then we consider the complete $(r-1)$-partite graph with $s-1$ parts having size $Ln$, a part having size $(\alpha-(s-1)L)n$, and the remaining $r-s-1$ parts having size $\frac{1}{r-s-1}(1-\alpha)n$. The sparsest $\alpha n$-vertex subset is the union of all the largest $s-1$ parts and the part of size $(\alpha-(s-1)L)n$. Such a sparsest $\alpha n$-vertex subset spans 
\[
\binom{s-1}{t}L^t n^t+\binom{s-1}{t-1}L^{t-1}\left(\alpha-(s-1)L\right)n^t
\]
copies of $K_t$.
\end{enumerate}

We shall show that the optimal value of $\beta$ is attained by these $(r-1)$-partite graphs. Consequently, we define the exact value of $\beta$ as follows.

For $t=2$, which corresponds to the case of $K_2$, we define the function $\beta$ in terms of $r,\alpha,$ and $\gamma$ as follows. Let
\begin{small}
\begin{align*}
    \beta(r,2,\alpha,\gamma)=\left\{
        \begin{array}{cc}
        0, &~\text{if}~~ 0\leq\alpha\leq \frac{1}{r-1}+d(2),\\
        \gamma-(1-\alpha)(\frac{r-2}{r-1}+d(r-1)), &~\text{if}~~ \frac{r-2}{r-1}+d(r-1)\leq \alpha\leq1,
        \end{array}
        \right.  
\end{align*}
\end{small}
and for each integer $s$ with $2\leq s\leq r-2$, 
{\small \begin{align*}
\beta(r,2,\alpha,\gamma)
=
\begin{cases}
\displaystyle
\frac{s-2}{2(s-1)}
\left(\frac{s-1}{r-1}+d(s)\right)^2
+
\left(\frac{s-1}{r-1}+d(s)\right)
\left(\alpha-\frac{s-1}{r-1}-d(s)\right),
\\[2ex]
\qquad\qquad\qquad\qquad\text{if }
\dfrac{s-1}{r-1}+d(s)<\alpha
\leq
\dfrac{s}{r-1}+\dfrac{r-s-1}{r-s}d(s),
\\[2.4ex]
\displaystyle
\gamma-\frac{r-s-2}{2(r-s-1)}
(1-\alpha)^2-\alpha(1-\alpha),
\\[2ex]
\qquad\qquad\qquad\qquad\text{if }
\dfrac{s}{r-1}+\dfrac{r-s-1}{r-s}d(s)
<
\alpha
\leq
\dfrac{s}{r-1}+d(s+1).
\end{cases}
\end{align*}}

For any $t\geq 2$, we define the function $\beta$ in terms of $r,t,\alpha,$ and $\gamma$ as follows. Let $\beta(r,t,\alpha,\gamma)=0$ if $0\leq\alpha\leq \frac{t-1}{r-1}+d(t)$. 
When $t\leq s\leq r-1$ and $\frac{s-1}{r-1}+d(s)<\alpha\leq \frac{s}{r-1}+\frac{r-s-1}{r-s}d(s)$, let
{\small \[
\beta(r,t,\alpha,\gamma)=\binom{s-1} {t}\left(\frac{1}{r-1}+\frac{d(s)}{s-1}\right)^t+\binom{s-1}{t-1}\left(\frac{1}{r-1}+\frac{d(s)}{s-1}\right)^{t-1}\left(\alpha-\frac{s-1}{r-1}-d(s)\right).
\]}
When $t\leq s\leq r-2$ and $\frac{s}{r-1}+\frac{r-s-1}{r-s}d(s)<\alpha\leq \frac{s}{r-1}+d(s+1)$, let
\[
\beta(r,t,\alpha,\gamma)=\binom{s-1} {t}L^t+\binom{s-1}{t-1}L^{t-1}(\alpha-(s-1)L),
\]
where $L$ is as defined above (see~\eqref{equ:L} and~\eqref{equ:L2}).

We are now ready to present the following theorem, which resolves \cref{prob-local density-inverse problem} and is the main result of this paper.

\begin{theorem}\label{thm-inverse problem of local density}

    Let $r$ and $t$ be integers with $2\leq t\leq r-1$, and let $\alpha, \gamma$ be real numbers with $0\leq\alpha\leq 1$ and $0\leq\gamma\leq\frac{r-2}{2(r-1)}$. Let $\beta=\beta(r,t,\alpha,\gamma)$ be defined as above. If $G$ is an $n$-vertex $K_{r}$-free graph with at least $\gamma n^{2}$ edges, then every $\lceil\alpha n\rceil$-vertex subset of $G$ spans at least $\lfloor\beta n^{t}\rfloor$ copies of $K_{t}$. 
\end{theorem}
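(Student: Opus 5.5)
We argue via the contrapositive, reducing to complete multipartite graphs. Fix an $n$-vertex $K_r$-free graph $G$ and a set $U$ with $|U|=\lceil\alpha n\rceil$ spanning the minimum number $k_t(G[U])$ of copies of $K_t$. We want to show that if $k_t(G[U])<\lfloor\beta n^t\rfloor$, then $e(G)<\gamma n^2$. Equivalently, for fixed $|U|$ and fixed $k_t(G[U])\le N$, we maximize $e(G)$ over $K_r$-free $G$ and show the maximum is attained by a complete $(r-1)$-partite graph of the shape in the three extremal cases.

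\textbf{Step 1: symmetrization inside and outside $U$.} We apply Zykov symmetrization separately to the vertices of $U$ and of $V(G)\setminus U$. For nonadjacent $x,y$ both in $U$ (or both outside $U$), we replace $y$ by a clone of $x$ or $x$ by a clone of $y$. The choice is made so that the weighted objective $e(G)-\lambda\, k_t(G[U])$ does not decrease, where $\lambda$ is a Lagrange-type parameter. This is legitimate because both quantities are affine in the cloning choice: deg and the $K_t$-count through a vertex add linearly for nonadjacent vertices. Cloning preserves $K_r$-freeness. Iterating, we reach a graph in which $G[U]$ and $G[V\setminus U]$ are each complete multipartite. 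Since $G$ is $K_r$-free and cross edges can be added without creating $K_r$ only in a controlled way, a further symmetrization across the cut lets us assume $G$ is complete $(r-1)$-partite. Here each part $P_i$ splits as $P_i\cap U$ and $P_i\setminus U$. Adding edges never hurts $e(G)$, and only inside $U$ do they cost $K_t$'s, so we may take $G$ to be a complete $(r-1)$-partite graph with parts $x_1n,\dots,x_{r-1}n$ and $U$-portions $u_i\le x_i$ satisfying $\sum u_i=\alpha$.

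\textbf{Step 2: optimization.} Now $k_t(U)/n^t=e_t(u_1,\dots,u_{r-1})$, the elementary symmetric polynomial, and $e(G)/n^2=e_2(x)$. For fixed $u$, maximize $e_2(x)$ with $x_i\ge u_i$ and $\sum x_i=1$: this equalizes the $x_i$ as far as the constraints allow, water-filling on top of the $u_i$. Then, under $e_2(x)\ge\gamma$, minimize $e_t(u)$. By convexity/majorization arguments (moving mass between two $u_i$ changes $e_t$ linearly in a way that favors concentrating), the optimal $u$ has $s-1$ equal large coordinates, at most one intermediate coordinate, and zeros elsewhere. The three regimes then arise as follows. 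If the large parts can be fully inside $U$ with $x_i=u_i$ and the top parts are otherwise unconstrained, we get $\beta=0$ via Theorem~\ref{thm-upper bound on Ks-independence number} with $s=t$. If the intermediate part is a proper piece of a full part, we get case 2. If the intermediate part equals its whole part and the $r-s-1$ outside parts share $1-\alpha$ equally, we get case 3, where the constraint $e_2(x)=\gamma$ gives equation~\eqref{equ:L}. Checking which regime is active at each $\alpha$ reproduces the thresholds involving $d(s)$. Boundary continuity at $\alpha=\frac{s-1}{r-1}+d(s)$ follows from Theorem~\ref{thm-upper bound on Ks-independence number}.

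\textbf{Step 3: exactness.} Because the symmetrized graph has at least as many edges and at most as many $K_t$'s in $U$, with integer part sizes, the real bound $\beta n^t$ gives the integer bound $\lfloor\beta n^t\rfloor$.

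\textbf{Main obstacle.} The main difficulty is Step 1. Symmetrization must keep $|U|$ fixed and handle edges between $U$ and its complement: cloning a $U$-vertex onto another changes both objectives simultaneously. The Lagrangian trick requires choosing $\lambda$ (depending on $G$) and a careful argument that a maximizer exists with linear dependence. I would first try proving directly that, among maximizers of $e(G)$ subject to $k_t(G[U])\le N$, some maximizer has the twin property, using a compactness/weighted-graph (graphon-like vertex weights) formulation. The second hardest part is the case analysis in Step 2 verifying that no other configuration beats the three candidates.
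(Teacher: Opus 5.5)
\textbf{There is a genuine gap in Step 1}, and choosing $\lambda$ more cleverly will not close it. When you clone one of two nonadjacent vertices of $U$ onto the other, the two options move $\bigl(e(G),k_t(G[U])\bigr)$ by opposite vectors. Typically one option raises both coordinates and the other lowers both, so neither gives a graph that is still feasible and has at least as many edges.

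All your weighted objective certifies is $e(G)-\lambda k_t(G[U])\le e(G^*)-\lambda k_t(G^*[U])$ for the final multipartite $G^*$. Optimizing over $\lambda$ then bounds $e(G)$ only by the concave envelope, as a function of $N$, of the true maximum $E^*(N)$. That envelope is strictly too weak. Take $\alpha=1$, $3\le t\le r-1$ and $\frac{r-3}{2(r-2)}<\gamma<\frac{r-2}{2(r-1)}$. Then $U=V(G)$, $\alpha$ lies in the regime $s=r-1$, and the statement is exactly Reiher's theorem on its top scallop; the extremal graphs there are $K_r$-free. Parametrizing those graphs ($r-2$ parts of size $\ell$, one part of size $1-(r-2)\ell$) gives $d\beta/d\gamma=\binom{r-3}{t-2}\ell^{t-2}$ with $\ell$ decreasing in $\gamma$. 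So $\beta$ is strictly concave in $\gamma$, $E^*$ is strictly convex in $N$, and a Lagrangian argument recovers only a Bollob\'as-type chord bound.

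The same obstruction already appears for $t=2$. In the regime $\frac{s-1}{r-1}+d(s)<\alpha\le\frac{s}{r-1}+\frac{r-s-1}{r-s}d(s)$, one checks that $\partial^2\beta/\partial\gamma^2$ has the sign of $\frac{s}{r-1}-\alpha$, so it is negative whenever $\alpha>\frac{s}{r-1}$. Two further points in Step 1 are also unsupported. Even reaching a complete multipartite $G[U]$ is unclear: in Zykov's transitivity step, replacing adjacent $y,z$ by two clones of $x$ changes your objective by $2\phi(x)-\phi(y)-\phi(z)+1-\lambda\,k_t(y,z)$, which can be negative. And the ``symmetrization across the cut'' has no mechanism, since cloning between $U$ and $V(G)\setminus U$ either changes $U$ or rewrites $G[U]$.

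The paper avoids all of this with two ideas your plan lacks. First, it proves only $t=2$ and deduces general $t$ by applying Reiher's theorem (\cref{thm-reiher's clique density thm}) to $G[U]$. In every regime the extremal $G[U]$ is complete $s$-partite with $s-1$ equal parts and one part no larger, which is exactly Reiher's extremal shape. So the two bounds compose without loss, and no $K_t$-optimization is ever needed.

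Second, for $t=2$ the paper never symmetrizes inside $U$. It runs Zykov symmetrization only on $R=V(G)\setminus U$, where there is no constraint; this leaves $G[U]$ untouched and does not decrease $e(G)$. It then greedily decomposes $U$ into vertex-disjoint copies of $K_{r-1},\dots,K_1$. Using \cref{lem-two cliques} and \cref{lem-partition implies bound on edges}, it bounds $e(U)+e(U,R)$ by the corresponding count in an explicit complete $(r-1)$-partite graph in which the image of $U$ still has $|U|$ vertices. These bounds depend only on the sizes of the clique classes, not on which edges $U$ spans. Hence one can delete edges inside the new $U$ down to $\lfloor\beta n^2\rfloor$, trading edges one-for-one, and obtain a feasible $(r-1)$-partite competitor with at least as many edges. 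Only then does a finite-dimensional optimization finish the proof: essentially your Step 2 restricted to $t=2$, i.e.\ the paper's analysis of $f(x_1,x_2,s)$.
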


Equivalently, taking the contrapositive of~\cref{thm-inverse problem of local density}, if $\beta\not=0$ and $G$ contains a $\lceil\alpha n\rceil$-vertex subset spanning fewer than $\lfloor\beta n^{t}\rfloor$ copies of $K_{t}$, then $e(G)\leq \gamma n^{2}$. Moreover, this result is tight up to an error term $o(n^t)$. The almost extremal cases are characterized as above, and the error term $o(n^t)$ comes from the fact that the complete $(r-1)$-partite graph constructed above has real-number-weighted order in each part. For every pair of rational numbers $\alpha$ and $\gamma$, when the associated parameters $d(s)$, $L$, and $\beta$ satisfy the required integrality conditions, ensuring that every part in the extremal construction has integer size, there exist infinitely many integers $n$ and examples for which the bound is attained.

We remark here that it suffices to restrict ourselves to the case when $t=2$ in the proof of \cref{thm-inverse problem of local density}. Indeed, the case when $t=2$ asserts precisely the desired local edge lower bound: for every $U\subseteq V(G)$ with $|U|=\alpha n$, $e(G[U])\geq \beta(r,2,\alpha,\gamma)n^2=\frac{\beta(r,2,\alpha,\gamma)}{\alpha^2}|U|^2.$
We therefore apply Reiher's clique density theorem (\cref{thm-reiher's clique density thm}) to the graph $G[U]$. The parameters in the definition of $\beta(r,t,\alpha,\gamma)$ are chosen exactly so that the clique density lower bound obtained in this way is
$\beta(r,t,\alpha,\gamma)n^t$.
Consequently, once the case when $t=2$ has been established, the assertion of \cref{thm-inverse problem of local density} for every $t\geq 2$ follows immediately from Reiher's theorem.

Finally, the Regularity Lemma~\cite{1978-szemeredi} enables us to extend \cref{thm-inverse problem of local density} to $H$-free graphs, leading to the following theorem.

\begin{theorem}\label{thm-H-free graph case}
Let $r$ and $t$ be integers with $2\leq t\leq r-1$, and let $\alpha, \gamma$ be real numbers with $0\leq\alpha\leq 1$ and $0\leq\gamma\leq\frac{r-2}{2(r-1)}$. Let $\beta=\beta(r,t,\alpha,\gamma)$ be defined as above. Assume $H$ is an $r$-chromatic graph. If $G$ is an $n$-vertex $H$-free graph with at least $\gamma n^{2}$ edges, then every set of $\alpha n$ vertices in $G$ spans at least $(\beta-o(1))n^{t}$ copies of $K_{t}$.  
\end{theorem}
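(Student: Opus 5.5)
We derive \cref{thm-H-free graph case} from \cref{thm-inverse problem of local density} using the Regularity Lemma and the standard removal/embedding argument. Fix $\varepsilon>0$ and let $G$ be an $n$-vertex $H$-free graph with at least $\gamma n^2$ edges and a set $U$ of $\alpha n$ vertices. The plan is to delete $o(n^2)$ edges of $G$ so that the result is $K_r$-free, and then apply \cref{thm-inverse problem of local density} to the resulting graph $G'$ with the slightly smaller density $\gamma-\delta$. Deleting edges can only decrease the number of copies of $K_t$ inside $U$, so $e_t(G[U])\ge e_t(G'[U])$. Hence a lower bound for $G'$ transfers directly to $G$.

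\textbf{Step 1: removing $K_r$.} Apply Szemer\'edi's Regularity Lemma with parameter $\eta\ll\varepsilon$ to obtain an equipartition $V_1,\dots,V_k$. Then form $G'$ by deleting:
\begin{itemize}
\item all edges inside parts;
\item all edges between irregular pairs;
\item all edges between pairs of density less than $d_0$.
\end{itemize}
This removes at most $(\eta+d_0+1/k)n^2=:\delta n^2$ edges. If $G'$ contained a $K_r$, its vertices would lie in $r$ distinct clusters that are pairwise $\eta$-regular with density at least $d_0$. Since $\chi(H)=r$, $H$ is a subgraph of the blow-up $K_r(|H|)$, and the embedding (key) lemma would then give a copy of $H$ in $G$, a contradiction. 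So $G'$ is $K_r$-free with $e(G')\ge(\gamma-\delta)n^2$. This step is the usual one and I expect no difficulty here.

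\textbf{Step 2: continuity of $\beta$.} By \cref{thm-inverse problem of local density} applied to $G'$ and the parameter $\gamma-\delta$,
\[e_t(G[U])\ge e_t(G'[U])\ge \lfloor\beta(r,t,\alpha,\gamma-\delta)n^t\rfloor.\]
A minor rounding issue arises if $\alpha n$ is not an integer; it is absorbed by replacing $\alpha$ with $\lceil\alpha n\rceil/n$, which changes the count by $O(n^{t-1})$.

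The main obstacle is to show that $\beta(r,t,\alpha,\gamma')\to\beta(r,t,\alpha,\gamma)$ as $\gamma'\uparrow\gamma$, uniformly enough to give $\beta-o(1)$. A subtle point is that $\gamma-\delta$ might fall below the stated range, which only matters for $\gamma$ near $\frac{t-2}{2(t-1)}$; there $\beta$ is $0$ or small, and monotonicity handles it. I plan to argue continuity piecewise:
\begin{itemize}
\item $d_{r,\gamma}(s)$ is continuous in $\gamma$.
\item In each regime, $\beta$ is a polynomial in $d(s)$ and $\alpha$, or is given through $L$.
\item $L$ is the root of the quadratic \eqref{equ:L}, which has a nonzero derivative in the admissible range \eqref{equ:L2}, so by the implicit function theorem it varies continuously with $\gamma$.
\item At the regime boundaries the formulas agree, since the extremal constructions coincide there: for example, at $\alpha=\frac{s}{r-1}+d(s+1)$ the case-3 construction degenerates to the case-2 construction for $s+1$.
\end{itemize}
Thus $\beta$ is continuous in $(\alpha,\gamma)$ on a compact domain, hence uniformly continuous. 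Choosing $\eta,d_0$ and $1/k$ small, with $n$ large, makes $\beta(\gamma-\delta)\ge\beta(\gamma)-\varepsilon$, and the theorem follows.

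If checking the boundary agreement is messy, a fallback is to observe that $\beta$ is monotone non-decreasing in $\gamma$ (a larger edge density forces more cliques, as the theorem itself implies via the extremal constructions). Combined with continuity from each side of every regime, this yields the limit.
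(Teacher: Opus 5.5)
Your proposal is correct and follows essentially the same route as the paper. Both regularize, delete the edges inside clusters, across irregular pairs and across sparse pairs, use the embedding lemma to get a $K_r$-free $G'$ with $e(G')\ge(\gamma-o(1))n^2$, apply \cref{thm-inverse problem of local density} to $G'$, and pass to the limit using continuity of $\beta$ in $\gamma$. Your piecewise check of that continuity (the formulas agree at regime boundaries, and $d(s)$ and $L$ depend continuously on $\gamma$) supplies a justification the paper only asserts.
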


Before proceeding to the proofs of \cref{thm-upper bound on Ks-independence number} and \cref{thm-inverse problem of local density}, we make the following remarks. Assume $G$ is an $n$-vertex $K_r$-free graph with at least $\gamma n^2$ edges.
\begin{enumerate}
    \item We can assume that $\alpha$ and $\gamma$ are rational. Assume that $\alpha$ is irrational. Then there exists a sequence $\{\alpha_k\}_{k=1}^{+\infty}$ of decreasing rational numbers such that $\alpha_k\to \alpha$. When $k$ is sufficiently large, we have $\lceil\alpha n\rceil=\lceil\alpha_{k} n\rceil$ and $\lfloor \beta(r,t,\alpha,\gamma) n^t\rfloor\leq\lfloor \beta(r,t,\alpha_k,\gamma) n^t\rfloor$ since $\alpha n$ is irrational and $\beta(r,t,\alpha,\gamma)$ is a continuous monotone increasing function. This implies we only need to consider the case when $\alpha$ is rational, which ensures that, for sufficiently large $k$, every $\lceil\alpha_{k} n\rceil$-vertex subset in $G$ contains at least $\lfloor \beta(r,t,\alpha_k,\gamma) n^t\rfloor$ copies of $K_t$, as desired. The case when $\gamma$ is rational can be proved similarly.

    \item We can further assume that $\alpha n$ and $\gamma n^2$ are integers. Otherwise, we consider a blow-up $G[N]$ of $G$ such that $\alpha n N$ and $\gamma n^2 N^2$ are integers. If every $\alpha n N$-vertex subset of $G[N]$ spans at least $\lfloor\beta n^t N^t \rfloor$ copies of $K_t$, then it follows from $\lceil \alpha n\rceil N\geq \alpha n N$ and $\lfloor\beta n^t\rfloor N^t\leq \lfloor\beta n^t N^t \rfloor$ that every $\lceil \alpha n\rceil N$-vertex subset of $G[N]$ spans at least $\lfloor\beta n^t\rfloor N^t$ copies of $K_t$, and thus every $\lceil \alpha n\rceil$-vertex subset of $G$ contains at least $\lfloor\beta n^t\rfloor$ copies of $K_t$.
\end{enumerate}
By these observations, we may assume that $\alpha n$ and $\gamma n^2$ are integers throughout this paper.

\section[MainProof]{Proofs of \cref*{thm-upper bound on Ks-independence number} and \cref*{thm-inverse problem of local density}}
\label{section-mainproof}

For any subset $U\subseteq V(G)$, let $G[U]$ denote the subgraph induced by $U$ of $G$. For two disjoint subsets $X, Y\subseteq V(G)$, we use $E_G(X,Y)$ (or simply $E(X,Y)$) to denote the set of edges between $X$ and $Y$ in $G$ and let $e(X,Y)=|E(X,Y)|$. We denote by $K_k[a_1,\ldots,a_k]$ the complete $k$-partite graph whose $i$-th part has size $a_i$ for each $i \in [k]$.

In this section, we present the proofs of \cref{thm-upper bound on Ks-independence number} and \cref{thm-inverse problem of local density}, starting with several lemmas.

\subsection[EdgesKr-freeGraphs]{An upper bound on the number of edges of $K_r$-free graphs}

In this subsection, we first give an upper bound on the number of crossing edges between a clique and a complete $(r-1)$-partite graph. 

\begin{lemma}\label{lem-two cliques}
Let $r$ and $t$ be two integers such that $1\leq t\leq r-1$. Let $G$ be a $K_{r}$-free graph. Assume that $G$ admits a vertex partition $V(G)=V_0\cup V_1$ such that $|V_0|=t$, $G[V_0]$ is isomorphic to $K_t$, and $G[V_1]$ is isomorphic to a complete $(r-1)$-partite graph $K_{r-1}[a_{1},\dots,a_{r-1}]$ with $a_{1}\geq \dots\geq a_{r-1}$. Then we have $$e(V_0,V_1)\leq t(a_{1}+\dots+a_{r-1})-(a_{r-t}+\dots+a_{r-1}).$$
\end{lemma}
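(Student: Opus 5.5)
Write $A_1,\dots,A_{r-1}$ for the parts of $G[V_1]$, with $|A_i|=a_i$. The plan is to count the edges from $V_0$ vertex by vertex, recording for each $v\in V_0$ which parts $A_i$ are \emph{full} for $v$, meaning $A_i\subseteq N(v)$. The lemma is then equivalent to
\[
\sum_{v\in V_0}\bigl(|V_1|-|N(v)\cap V_1|\bigr)\ \geq\ a_{r-t}+\dots+a_{r-1},
\]
that is, the total number of non-edges between $V_0$ and $V_1$ is at least the sum of the $t$ smallest part sizes. Let $F_v\subseteq[r-1]$ be the set of parts that are full for $v$. Then $v$ has at least one non-neighbour in each part $A_i$ with $i\notin F_v$.

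The key step is to see that the $K_r$-freeness forces the sets $F_v$ to be small in a combined sense. Suppose some index set $S$, with $|S|\ge r-t$, is contained in $F_v$ for every $v\in V_0$. Choose one vertex from each $A_i$ with $i\in S$. These $|S|\ge r-t$ vertices form a clique because $G[V_1]$ is complete multipartite, and every vertex of $V_0$ is adjacent to all of them. Together with the clique $V_0$ they give a $K_{t+|S|}\supseteq K_r$, which is a contradiction. Hence $\bigl|\bigcap_{v}F_v\bigr|\le r-t-1$, so at least $t$ parts are non-full for some vertex of $V_0$.

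The main obstacle is that the intersection bound alone does not suffice. A single part being non-full costs only one non-edge, whereas we need to lose the full sizes $a_i$. I would therefore refine the argument by exploiting the partial adjacency. For each $v$ and each part $A_i$, let $m_{v,i}=|A_i\setminus N(v)|$. The right generalization is a weighted statement: for any choice of one vertex $x_i\in A_i$ in each part, the set $T=\{v\in V_0\}\cup\{x_i\}$ contains no $K_r$. This means that the bipartite non-adjacency between $V_0$ and $\{x_1,\dots,x_{r-1}\}$ must hit every set of $t$ vertices of $V_0$ together with $r-t$ of the $x_i$. More simply, the non-adjacency must meet at least $t$ of the chosen $x_i$, with at least one non-neighbour in $V_0$ for each, since removing the $x_i$ that have a non-neighbour in $V_0$ must leave at most $r-t-1$ of them.

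So for every transversal $(x_1,\dots,x_{r-1})$, the number of $x_i$ having a non-neighbour in $V_0$ is at least $t$. Let $B_i\subseteq A_i$ be the set of vertices with at least one non-neighbour in $V_0$. If $B_i\ne A_i$ for more than $r-t-1$ indices, we could pick $x_i\in A_i\setminus B_i$ for those indices and get a contradiction. Hence $B_i=A_i$ for at least $t$ indices $i$, and so
\[
\#\{\text{non-edges}\}\ \ge\ \sum_i|B_i|\ \ge\ \text{(sum of some $t$ part sizes)}\ \ge\ a_{r-t}+\dots+a_{r-1}.
\]
This last transversal argument is the clean core of the proof and replaces the weaker intersection count. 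Subtracting the number of non-edges from $t|V_1|$ gives the stated bound.
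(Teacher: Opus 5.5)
Your proof is correct and, once the exploratory intersection-count detour is set aside, it is the paper's argument. The paper also shows that at least $t$ parts consist entirely of vertices with a non-neighbour in $V_0$ (your condition $B_i=A_i$), since otherwise $r-t$ parts would each supply a vertex complete to $V_0$, forming a $K_r$ with $V_0$, and each such vertex then accounts for a distinct missing edge, giving at least $a_{r-t}+\dots+a_{r-1}$ non-edges.
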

\begin{proof}
We first observe that the number of edges of the complete bipartite graph $K_2[t, n-t]$ is $t(a_1 + \dots + a_{r-1})$. We compare the edge set $E(V_0, V_1)$ with that of $K_2[t, n-t]$. We claim that if $G$ is $K_r$-free, then at least $a_{r-t} + \dots + a_{r-1}$ edges are missing from $E(V_0,V_1)$, compared to $K_2[t, n-t]$. Equivalently, in $G[V_1] = K_{r-1}[a_1,\dots,a_{r-1}]$, there are at least $t$ parts in which every vertex misses at least one neighbor in $G[V_0]$. Otherwise, there would be at most $t-1$ such parts, which means that in at least $r-t$ parts there exists some vertex adjacent to all vertices of $K_t$. This would form a $K_r$, contradicting the assumption that $G$ is $K_r$-free.
\end{proof}

The following lemma provides an estimate of the number of edges of $K_r$-free graphs based on a greedy partition strategy. Such a partition is also useful for solving other problems; see \cite{2025-HMWZ} and references therein for instances. 

\begin{lemma}\label{lem-partition implies bound on edges}
Assume $G$ is a $K_{r}$-free graph with a partition $V(G)=V_{1}\cup \dots\cup V_{r-1}$ such that, for each $i\in [r-1]$, $V_{i}$ can be partitioned into $V_{i1}, \ldots, V_{is_{i}}$ such that each $G[V_{ij}]$, for $j\in [s_i]$, is isomorphic to $K_i$, and $G[\bigcup\limits_{\ell=1}^i V_{\ell}]$ contains no $K_{i+1}$. For each $i\in [r-1]$, let $b_{i}:=\sum\limits_{j=r-i}^{r-1}\frac{|V_{j}|}{j}$. Then $$e(G)\leq e(K_{r-1}[b_{1},\dots,b_{r-1}]).$$
\end{lemma}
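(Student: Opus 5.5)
The plan is to compare $G$ with a concrete embedding of its clique decomposition into the complete $(r-1)$-partite graph $K_{r-1}[b_1,\dots,b_{r-1}]$, and to bound the edges of $G$ clique-pair by clique-pair. For each $i\in[r-1]$ write $s_i$ for the number of cliques $V_{i1},\dots,V_{is_i}$, so that $|V_i|=i s_i$. Then $b_p=\sum_{j=r-p}^{r-1}s_j$. We have
\[
\sum_{p=1}^{r-1} b_p=\sum_{j=1}^{r-1} j s_j=|V(G)|,
\]
since the clique-class $s_j$ contributes to exactly the $j$ values $p=r-j,\dots,r-1$.

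\textbf{Step 1 (the model graph).} Let the parts of $K_{r-1}[b_1,\dots,b_{r-1}]$ be $P_1,\dots,P_{r-1}$ with $|P_p|=b_p$. Assign to each clique $V_{ij}$ of size $i$ one vertex in each of the parts $P_{r-i},\dots,P_{r-1}$, using distinct vertices for different cliques. By the count above, this uses every vertex of the model exactly once. In the model, each clique $V_{ij}$ becomes a $K_i$, contributing $\binom{i}{2}$ edges. For two cliques of sizes $i\le k$, their part sets are $\{r-i,\dots,r-1\}\subseteq\{r-k,\dots,r-1\}$, which overlap in exactly $i$ parts. So the number of model edges between them is $ik-i=i(k-1)$. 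Summing, $e(K_{r-1}[b_1,\dots,b_{r-1}])$ equals
\[
\sum_{i} s_i\binom{i}{2}+\sum_{i<k} s_is_k\, i(k-1)+\sum_i\binom{s_i}{2} i(i-1).
\]

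\textbf{Step 2 (bounding $G$).} Edges of $G$ inside a clique $V_{ij}$ number at most $\binom{i}{2}$. Now take two distinct cliques $Q=V_{ij}$ and $Q'=V_{kl}$ with $i\le k$. Every vertex $v\in Q$ lies in $V_1\cup\dots\cup V_k$, and $G[\bigcup_{\ell\le k}V_\ell]$ is $K_{k+1}$-free. Hence $v$ is not adjacent to all $k$ vertices of $Q'$, for otherwise $Q'\cup\{v\}$ would be a $K_{k+1}$. Thus $v$ has at most $k-1$ neighbours in $Q'$, and $e(Q,Q')\le i(k-1)$. This bound applies equally when $i=k$, giving $i(i-1)$ for two cliques at the same level. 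Since the cliques partition $V(G)$, summing over all cliques and pairs of cliques gives $e(G)$ at most the expression in Step 1, which proves the lemma.

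The only delicate point is the pairing step: we must use the $K_{k+1}$-freeness of the union of levels up to the \emph{larger} clique's level. We must also check that the overlap count in the model is exactly $i$, matching the $i(k-1)$ bound. Everything else is bookkeeping. This is in the spirit of \cref{lem-two cliques}, but here each vertex of the smaller clique only needs to miss one vertex of the larger clique.
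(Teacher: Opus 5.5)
Your proof is correct and follows essentially the same route as the paper. You bound $e(V_{ij},V_{kl})\le i(k-1)$ using the $K_{k+1}$-freeness of $G[\bigcup_{\ell\le k}V_\ell]$, which the paper phrases as an application of \cref{lem-two cliques} with $K_k$ viewed as $K_k[1,\dots,1]$, and then identify the resulting sum with $e(K_{r-1}[b_1,\dots,b_{r-1}])$.

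The only differences are cosmetic. For edges inside a level $V_i$, the paper invokes Tur\'an's theorem to get $\binom{i}{2}(|V_i|/i)^2$, which is exactly what your pairwise count $s_i\binom{i}{2}+\binom{s_i}{2}i(i-1)=\binom{i}{2}s_i^2$ gives. Your explicit embedding of each clique into the parts $P_{r-i},\dots,P_{r-1}$ spells out the double-counting identity that the paper only asserts.
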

\begin{proof}
    Observe that each $G[V_i]$ is obtained by greedily packing a maximum number of vertex-disjoint copies of $K_i$ in $G\setminus(\bigcup_{\ell=i+1}^{r-1}V_\ell).$
    For distinct $i,j$ with $1\leq i<j\leq r-1$ and $s\in [s_i], t\in [s_j]$, we apply \cref{lem-two cliques} to $G[V_{is}]$ (which is isomorphic to $K_i$) and $G[V_{jt}]$ (which is isomorphic to $K_j$) with viewing $K_j=K_j[1,1,\ldots, 1]$ and thus $e(V_{is},V_{jt})\leq ij-i$. Hence, for distinct $i,j$ with $1\leq i<j\leq r-1$, we have that
    \begin{equation}\label{ineq-}
        e(V_{i},V_{j})= \sum_{p=1}^{s_p}\sum_{q=1}^{s_q}e(V_{ip},V_{jq})\leq \sum_{p=1}^{s_p}\sum_{q=1}^{s_q}i(j-1)= \frac{|V_{i}||V_{j}|}{ij}i(j-1).
    \end{equation}
    It follows from Tur\'{a}n's theorem that $e(V_{i})\leq \binom{i}{2}(\frac{|V_{i}|}{i})^{2}$.
    Now we derive
    \begin{align*}
        e(G)=\sum_{i=1}^{r-1}e(V_{i})+\sum_{1\leq i<j\leq r-1}e(V_{i},V_{j})&=\sum_{i=1}^{r-1}e(V_i)+\sum_{1\leq i\leq j\leq r-1}\sum_{s=1}^{s_i}\sum_{t=1}^{s_j}e(V_{is},V_{jt})\\
        &\leq \sum_{i=1}^{r-1}\binom{i}{2}\left(\frac{|V_{i}|}{i}\right)^{2}+\sum_{1\leq i<j\leq r-1}\frac{|V_{i}||V_{j}|}{ij}i(j-1)\\
        &=\sum_{1\leq k<\ell\leq r-1}\left(\sum_{i=r-k}^{r-1}\frac{|V_{i}|}{i}\right)\left(\sum_{j=r-\ell}^{r-1}\frac{|V_{j}|}{j}\right)\\
        &=e(K_{r-1}[b_{1},\dots,b_{r-1}]),
    \end{align*}
    where $b_{i}=\sum\limits_{j=r-i}^{r-1}\frac{|V_{j}|}{j}$. 
    Indeed, the inequality in the second line follows from Tur\'{a}n's theorem and \eqref{ineq-}. 
    The equality in the third line follows from double-counting on the number of edges of $K_{r-1}[b_1,\dots,b_{r-1}]$, and the last equality is immediate from the definition.   
\end{proof}

\subsection[Ks independence number]{An upper bound on the $K_{s}$-independence number}

In this subsection, we shall prove \cref{thm-upper bound on Ks-independence number}. Let $r$ and $s$ be two integers with $r\geq s$. We first introduce a symmetrization operation $Sym_{r,s}(G,H)$, which is motivated by the method notably used in Zykov's proof of Tur\'{a}n's theorem \cite{1949-zykov}.
Symmetrization techniques enable the reduction of many extremal problems to specific, well-structured graph classes. For example, Zykov's proof of Turán's theorem \cite{1949-zykov} reduces the problem of determining the maximum number of edges of a $K_r$-free graph to that of counting edges of a complete balanced $(r-1)$-partite graph; In \cite{2022-reiher,2025-reiher}, the Ramsey–Turán problem for triangle-free graphs is similarly reduced to a class of graphs whose edge counts are readily estimable. The central idea underlying our proofs of \cref{thm-upper bound on Ks-independence number} and \cref{thm-inverse problem of local density} is to employ a refined symmetrization argument, thereby reducing the problems to $(r-1)$-partite graphs, from which the desired exact bounds can be derived.

\begin{algorithm}
\caption{Symmetrization $Sym_{r,s}(G,H)$}
\label{alg:symmetrization}

\begin{algorithmic}[1]

\Statex \hspace{-\algorithmicindent}\textbf{Input:} A $K_r$-free graph $G$ with a $K_s$-free subgraph $H$
\Statex \hspace{-\algorithmicindent}\textbf{Output:} A graph pair $(G',H')$

\State Set $R_0=V(H)$ and $G_0=G$. Let $i\gets 1$. 
\While{$R_{i-1}\neq\emptyset$} 
\State Choose $u_i\in R_{i-1}$ with maximum degree $d_{G_{i-1}}(u_i)$. 
\State Define $S_i:=R_{i-1}\setminus N(u_i)$ and $R_i:=R_{i-1}\setminus S_i=N_{G[R_{i-1}]}(u_i)$. 
\State Obtain $G_i$ from $G_{i-1}$ by deleting all edges incident with $S_i$ and adding all edges between $S_i$ and $N_{G_{i-1}}(u_i)$. 
\State Set $i\gets i+1$. 
\EndWhile

\State \Return $(G',H')=(G_{i-1},G_{i-1}[V(H)])$
\end{algorithmic}
\end{algorithm}

The input of this operation is a pair of graphs $(G, H)$, where $G$ is a $K_{r}$-free graph and $H$ is a $K_{s}$-free subgraph of $G$. Its output is also a pair of graphs $(G',H')$, i.e., $Sym_{r,s}(G,H)=(G',H')$, satisfying that $G'$ is still $K_{r}$-free with $e(G')\geq e(G)$, $H'$ is complete $(s-1)$-partite and $G[V(G)\setminus V(H)]$ is isomorphic to $G'[V(G)\setminus V(H)]$; see~\cref{alg:symmetrization}. The Zykov's symmetrization of $G$ can be regarded as $Sym_{r,r}(G,G)$ (i.e, $s=r$ and $H=G$), which we abbreviate as $Sym_r(G)$ or $Sym(G)$ when it is clear from the context. That is to say, $Sym(G)$ is a complete $(r-1)$-partite graph with $e(Sym(G))\geq e(G)$. 

This algorithm must terminate after at most $s-1$ steps since $H$ is $K_{s}$-free and at the $i$-th  step $H_{i}$ is the common neighborhood of a $K_{i}$ in $H$. One can also verify that $G_{i}$ is $K_{r}$-free and $e(G_{i})\geq e(G_{i-1})$. Clearly, the output $H'$ is complete $(s-1)$-partite. Furthermore, for each $i\in \{1,\dots,s-1\}$, all the vertices in $S_i$ have the same neighborhood. Moreover, it is clear that the edges inside $G[V(G) \setminus V(H)]$ remain unchanged and thus $G[V(G) \setminus V(H)]$ is isomorphic to $G'[V(G) \setminus V(H)]$.

Recall that $T_{k}(n)$ denotes the Tur\'{a}n graph on $n$ vertices with $k$ parts, i.e., $T_k(n) = K_k[a_1, \dots, a_k]$, where $|a_i - a_j| \leq 1$ for all $i, j \in \{1,\dots,k\}$. For two graphs $G_1$ and $G_2$, let $G_1 \times G_2$ denote the graph with the vertex set $V(G_1) \cup V(G_2)$, where $V(G_1)$ and $V(G_2)$ are assumed to be disjoint, and the edge set $E(G_1 \times G_2):=E(G_1) \cup E(G_2) \cup \{uv \mid u \in V(G_1),~ v \in V(G_2) \}$. Now we are ready to prove~\cref{thm-upper bound on Ks-independence number}.

\begin{proof}[Proof of \cref*{thm-upper bound on Ks-independence number}]
    Let $\mathcal{G}$ denote the set of $n$-vertex $K_{r}$-free graphs $G$ with $K_{s}$-independence number at least $(\frac{s-1}{r-1}+d(s)) n$ and with the maximum number of edges. Now it suffices to prove that every graph in $\mathcal{G}$ has at most $\gamma n^{2}$ edges.
    
    We shall first prove that, by symmetrization, there is a complete $(r-1)$-partite graph in $\mathcal{G}$. Let $G\in \mathcal{G}$ and let $F$ be a maximum $K_{s}$-free subgraph of $G$. By the definition, $|F|\geq (\frac{s-1}{r-1}+d(s)) n$. 
    We operate the symmetrization on $(G, F)$. Let $(G',F'):=Sym_{r,s}(G,F)$. We know that $G'$ is also a $K_{r}$-free graph, $F'$ is a $K_{s}$-free subgraph of $G'$ and it is complete $(s-1)$-partite, $e(G')\geq e(G)$ and $G[V(G)\setminus V(F)]=G'[V(G)\setminus V(F)]$. Then let $G''=Sym(G')$. It follows that $G''$ is complete $(r-1)$-partite.
    We claim that $G''$ is the desired graph belong to $\mathcal{G}$ and it suffices to verify $G''[V(F)]$ is still $K_{s}$-free. Note that $F'$ is complete $(s-1)$-partite and we denote those $(s-1)$ parts by $F_{1}, \ldots,  F_{s-1}$, respectively. Note that some of the $F_{i}$ may be empty. By the property of symmetrization, for each $i\in \{1,\dots,s-1\}$, all the vertices in $F_{i}$ have the same neighbors in $G'$. At the $i$-th step of $Sym(G')$, the identified vertex $u_i$ satisfies that for each $j\in \{1,\dots,s-1\}$, either $F_{j}\subseteq N(u_{i})$ or $F_{j}\cap N(u_{i})=\emptyset$. In the both cases $F_{j}$ remains independent after $i$-th step of $Sym(G')$, and thus $G''[V(F)]$ remains $(s-1)$-partite and $K_{s}$-free. Hence, we find a complete $(r-1)$-partite graph $G''\in\mathcal{G}$.

    We next prove that there is a graph $G^*$ in $\mathcal{G}$ isomorphic to $T_{s-1}\left((\frac{s-1}{r-1}+d(s))n\right)\times T_{r-s}\left((\frac{r-s}{r-1}-d(s))n\right)$. Observe that a complete $(r-1)$-partite graph $G^*$ belongs to $\mathcal{G}$, only if the size of the union of largest $s-1$ parts is at least $(\frac{s-1}{r-1}+d(s))n$. And it contains more edges when difference between the size of two parts is smaller. Let $\alpha=\frac{s-1}{r-1}+d(s)$. Hence each of largest $s-1$ parts has size $\frac{\alpha n}{s-1}$, and each of the remaining parts has size $\frac{(1-\alpha )n}{r-s}$.  Since
    \[
    \frac{s-2}{2(s-1)}\alpha^{2}+\alpha(1-\alpha)+\frac{r-s-1}{2(r-s)}(1-\alpha)^{2}=\gamma,
    \] 
    $e(G^*)=\gamma n^2$, and we are done.
    We know that in $\mathcal{G}$ every graph has at most $\gamma n^{2}$ edges as asserted. 
\end{proof}

\subsection[InverseProblem]{Proof of \cref*{thm-inverse problem of local density}}

\begin{proof}[Proof of \cref*{thm-inverse problem of local density}]
    As remarked immediately following the statement of \cref{thm-inverse problem of local density} in \cref{sec:contribution}, we only need to consider the case when $t=2$.

    Let $r\geq 3$, $0\leq\alpha\leq 1$, $0\leq\gamma\leq\frac{r-2}{2(r-1)}$ and $\beta=\beta(r, 2,\alpha, \gamma)$ be defined as above~\cref{thm-inverse problem of local density}. Assume $\beta \neq 0$. Let $\mathcal{G}$ denote the family of $n$-vertex $K_r$-free graphs $G$ that each contains a subset of $\alpha n$ vertices spanning at most $\lfloor \beta n^{2}\rfloor$ edges and, in addition, $G$ has the maximum number of edges. 

    \begin{claim}\label{claim:r-partite graphs}
    $\mathcal{G}$ contains an $(r-1)$-partite graph.
    \end{claim}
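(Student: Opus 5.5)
We will prove the claim by symmetrization, in the same spirit as the proof of \cref{thm-upper bound on Ks-independence number}, but now the protected object is the sparse set rather than a $K_s$-free set. Fix $G\in\mathcal{G}$ and a set $U\subseteq V(G)$ with $|U|=\alpha n$ and $e(G[U])\le\lfloor\beta n^2\rfloor$. The aim is a sequence of Zykov-type moves that never decrease $e(G)$, never create a $K_r$, and never increase the number of edges inside a suitably chosen $\alpha n$-set. Since $G$ already has the maximum number of edges, the end result is again in $\mathcal{G}$.

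\textbf{Step 1: symmetrize the outside.} First we make $G[V\setminus U]$ well structured. Apply Zykov's cloning, run as in \cref{alg:symmetrization}, to the pair $(G,G[V\setminus U])$ with $s=r$. The edges inside $U$ are untouched, so $e(G[U])$ does not change. The resulting graph $G_1$ is $K_r$-free and satisfies $e(G_1)\ge e(G)$. In $G_1$, the set $V\setminus U$ is partitioned into classes $S_1,\dots,S_k$ with $k\le r-1$. Each class is independent, its vertices are twins, and distinct classes are completely joined.

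\textbf{Step 2: symmetrize inside $U$ without adding edges to $U$.} This is the main obstacle. The standard move replaces a vertex $v$ by a clone of a vertex $u$ with $d(u)\ge d(v)$, and this can add edges inside $U$. To control it, I would use the following twin-replacement principle. Take two nonadjacent vertices $u,v$ lying in the same set (both in $U$, or both outside $U$). Replacing $v$ by a twin of the one with larger value of
\[
d(x)-|N(x)\cap U|\quad(\text{or its appropriate weighted variant})
\]
keeps $e(G[U])$ from increasing while not decreasing $e(G)$. The reason is that both quantities change linearly in the choice, and ties can be broken toward the vertex with fewer neighbours in $U$. Cloning a vertex preserves $K_r$-freeness, since a clone is nonadjacent to its original.

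Iterating these moves, in the greedy order of \cref{alg:symmetrization} started from a maximum-degree vertex, makes nonadjacency an equivalence relation. At that point the graph is complete multipartite, and being $K_r$-free it has at most $r-1$ parts. The one case that needs care is a move in which the $U$-count strictly decreases but $e(G)$ also strictly decreases. In that case I would instead move a vertex across the boundary of $U$: swap a vertex of $U$ with a twin outside $U$. This keeps $|U|=\alpha n$ and does not increase $e(G[U])$. If the trade-off argument cannot be made monotone, the fallback is to show such a strictly decreasing move contradicts the edge-maximality of $G$ in $\mathcal{G}$.

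\textbf{Step 3: termination.} Use the potential $\bigl(e(G),\,-e(G[U])\bigr)$ in lexicographic order, together with the number of twin classes. Each move strictly improves this potential or merges two classes, so the process terminates. The final graph is a complete $(r-1)$-partite graph in $\mathcal{G}$ (with empty parts allowed), which proves \cref{claim:r-partite graphs}.
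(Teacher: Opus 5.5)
\textbf{The gap is Step 2.} For nonadjacent $u,v\in U$, replacing $v$ by a clone of $u$ changes $(e(G),e(G[U]))$ by $(d(u)-d(v),\,|N_U(u)|-|N_U(v)|)$. The reverse replacement changes it by the negative of this vector. So whenever $u$ has both larger degree and more neighbours in $U$, neither move keeps $e(G)$ from dropping and $e(G[U])$ from rising. Ranking by $d(x)-|N(x)\cap U|$ only protects the edges not inside $U$. Your fallbacks (swapping across the boundary, or deriving a contradiction with maximality) are not arguments, and the second one is false.

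In fact the endpoint you aim for, a \emph{complete} $(r-1)$-partite graph in $\mathcal{G}$, need not exist. Take $r=3$, $n=10$, $\alpha n=7$ and $\lfloor\beta n^2\rfloor=5$, which a suitable $\gamma$ realizes. Let $P=\{c,x,y,z\}$ and $Q=\{w_1,\dots,w_6\}$. Join $P$ to $Q$ completely except for the edge $cw_6$, and let $U=\{c,w_1,\dots,w_6\}$. This triangle-free graph has $23$ edges and $e(G[U])=5$. The value $23$ is the maximum:
\begin{itemize}
\item if $V\setminus U$ spans an edge $xy$, then $N(x)\cap N(y)=\emptyset$, and a direct count gives at most $21$ edges;
\item otherwise each vertex outside $U$ has an independent neighbourhood inside $U$, of size at most $6$ unless $G[U]$ is empty, giving at most $5+18=23$ edges (or $21$ if $G[U]$ is empty).
\end{itemize}
No complete bipartite graph on $10$ vertices has exactly $23$ edges, so $\mathcal{G}$ contains no complete $2$-partite graph. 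The pair $c,w_6$ is exactly a pair on which every twin move fails.

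\textbf{The missing idea: do not symmetrize inside the sparse set at all; bound by counting instead.} After your Step 1, the outside $R$ is complete $(r-1)$-partite with part sizes $a_1\ge\dots\ge a_{r-1}$. The paper then proceeds as follows.
\begin{enumerate}
\item Greedily pack $U$ with vertex-disjoint copies of $K_{r-1},K_{r-2},\dots,K_1$, giving sets $V_{r-1},\dots,V_1$ such that $\bigcup_{\ell\le i}V_\ell$ is $K_{i+1}$-free.
\item By \cref{lem-partition implies bound on edges}, $e(G[U])\le e(K_{r-1}[b_1,\dots,b_{r-1}])$, where $b_i=\sum_{j=r-i}^{r-1}|V_j|/j$.
\item By \cref{lem-two cliques}, which uses $K_r$-freeness against the complete $(r-1)$-partite outside, each $K_i$ in $V_i$ sends at most $i\sum_p a_p-(a_{r-i}+\dots+a_{r-1})$ edges to $R$.
\item Both bounds are attained simultaneously by the complete $(r-1)$-partite graph with parts of sizes $a_i+b_i$; the larger $b_i$ sit on the smaller $a_i$. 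Its $b$-portions form an $\alpha n$-set $S''$.
\item If $e(K_{r-1}[b])\le\lfloor\beta n^2\rfloor$, this graph is already in $\mathcal{G}$. Otherwise, delete edges inside $S''$ until exactly $\lfloor\beta n^2\rfloor$ remain. The edge count still dominates $e(G)$, because $e_G(U)\le\lfloor\beta n^2\rfloor$ was the bound used for the inside edges in this case.
\end{enumerate}
That deletion step is why the claim asserts only an $(r-1)$-partite graph, not a complete one, consistent with the example above.
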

    
    \begin{proof*}[Proof of~\cref*{claim:r-partite graphs}]
    Let $G\in \mathcal{G}$ and let $S$ be a subset of $\alpha n$ vertices spanning at most $\lfloor \beta n^{2}\rfloor$ edges in $G$. Let $R_S=V(G)\setminus S$.  

    For each pair $(G, S)$, we claim that we can apply the symmetrization such that $G[R_S]$ to be $(r-1)$-partite. We first apply the symmetrization to make $G[R]$ to be $(r-1)$-partite. Let $(G',R'):=Sym_{r,r}(G,R)$. By definition, $G'[R']$ is complete $(r-1)$-partite, and then each part of $G'[R']$ is denoted by $R_{1},\dots,R_{r-1}$, respectively. We further assume that for each $i\in [r-1]$, $|R_{i}|=a_{i}$ with $a_{1}\geq \dots \geq a_{r-1}$ and note that some $R_{i}$'s might be empty. 
    
    After symmetrization, we have that $G[S]$ is isomorphic to $G'[S]$. Now we partition $S$ into $V_{1},\dots,V_{r-1}$, such that for each $i\in [r-1]$, $V_{i}$ can be partitioned into $V_{i1}\cup \cdots\cup V_{i{s_i}}$ such that each $G'[V_{ij}]$ (for $j\in [s_i]$) is isomorphic to $K_i$, and $G'[\bigcup\limits_{\ell=1}^i V_{\ell}]$ contains no $K_{i+1}$. Such a partition exists by greedily selecting vertex-disjoint copies of $K_r$, $K_{r-1}$, $\ldots$, and finally $K_1$, and grouping the corresponding vertex subsets accordingly. By~\cref{lem-partition implies bound on edges}, $$e_{G}(S)=e_{G'}(S)\leq \sum_{1\leq s<t\leq r-1}\left(\sum_{i=r-s}^{r-1}\frac{|V_{i}|}{i}\right)\left(\sum_{j=r-t}^{r-1}\frac{|V_{j}|}{j}\right).$$
    
    For each $i\in [r-1]$ and each $G'[V_{ij}]$ (isomorphic to $K_i$) in $ V_{i}$, by~\cref{lem-two cliques} we have $e_{G'}(V_{ij}, R)\leq i(a_{1}+\dots+a_{r-1})-(a_{r-i}+\dots+a_{r-1})$. 
    Now we divide the proof into two cases according to whether the upper bound of $e_{G'}(S)$ is larger than $\lfloor \beta n^{2}\rfloor$.

    \medskip
    \noindent\textbf{Case 1.} \emph{Assume that $\sum\limits_{1\leq s<t\leq r-1}\Big(\sum\limits_{i=r-s}^{r-1}\frac{|V_{i}|}{i}\Big)\Big(\sum\limits_{j=r-t}^{r-1}\frac{|V_{j}|}{j}\Big)\leq \lfloor \beta n^{2}\rfloor$.}
    \medskip
    
    We consider the following complete $(r-1)$-partite graph
    {\small $$
    G'':=K_{r-1}\left[a_{1}+\frac{|V_{r-1}|}{r-1},\dots,\, a_{i}+\sum_{k=r-i}^{r-1}\frac{|V_{k}|}{k},\dots,\, a_{r-1}+\sum_{k=1}^{r-1}\frac{|V_{k}|}{k}\right]. 
    $$ }
    Note that $G''$ contains a complete $(r-1)$-partite induced subgraph isomorphic to 
    $K_{r-1}\Big[\frac{|V_{r-1}|}{r-1},\dots,\sum\limits_{k=r-i}^{r-1}\frac{|V_{k}|}{k},\dots, \sum\limits_{k=1}^{r-1}\frac{|V_{k}|}{k}\Big],$
    denoted by $G''[S'']$. 
    Clearly, $|S''|=|V_{1}|+\dots+|V_{r-1}|=|S|=\alpha n$, and $e(S'')\leq \lfloor \beta n^{2}\rfloor$. To prove $G''\in \mathcal{G}$, it suffices to show $e(G'')\geq e(G)$. By the symmetrization, and combining \cref{lem-two cliques,lem-partition implies bound on edges}, we have that
    \begin{align*}
        e(G)\leq e(G')&= e(S)+e(R')+e(S,R')\\
        &=e(S)+e(R')+\sum_{i=1}^{r-1}e(V_{i},R')\\
        &\leq \sum_{1\leq s<t\leq r-1}\left(\sum_{i=r-s}^{r-1}\frac{|V_{i}|}{i}\right)\left(\sum_{j=r-t}^{r-1}\frac{|V_{j}|}{j}\right)+\sum_{1\leq s<t\leq r-1}a_s a_t\\
        &\qquad\qquad\qquad+\sum_{i=1}^{r-1}\frac{|V_{i}|}{i}(i(a_{1}+\dots+a_{r-1})-(a_{r-i}+\dots+a_{r-1})) \\
        &=\sum_{1\leq s<t\leq r-1}\left(a_{s}+\sum_{k=r-s}^{r-1}\frac{|V_{k}|}{k}\right)\left(a_{t}+\sum_{k=r-t}^{r-1}\frac{|V_{k}|}{k}\right)=e(G''),
    \end{align*}
    where the validity of the second last equality is similar to the proof of~\cref{lem-partition implies bound on edges}. It implies that $e(G'')\geq e(G)$. Thus we find a desired complete $(r-1)$-partite graph $G''$ belong to $\mathcal{G}$. This completes the proof of Case 1.

    \medskip
    \noindent\textbf{Case 2.}  \emph{Assume that $\sum\limits_{1\leq s<t\leq r-1}\Big(\sum\limits_{i=r-s}^{r-1}\frac{|V_{i}|}{i}\Big)\Big(\sum\limits_{j=r-t}^{r-1}\frac{|V_{j}|}{j}\Big) \geq \lfloor \beta n^{2}\rfloor$.} 
    \medskip
    
    We consider a $(r-1)$-partite graph $G''$ satisfying that
    $$
    G''\subseteq K_{r-1}\left[a_{1}+\frac{|V_{r-1}|}{r-1},\dots,a_{i}+\sum_{k=r-i}^{r-1}\frac{|V_{k}|}{k},\dots,a_{r-1}+\sum_{k=1}^{r-1}\frac{|V_{k}|}{k}\right],
    $$
    $G''$ has the same vertex set as this complete $(r-1)$-partite graph, contains a $(r-1)$-partite induced subgraph, denoted  $G''[S'']$, of $K_{r-1}\Big[\frac{|V_{r-1}|}{r-1},\dots,\sum\limits_{k=r-i}^{r-1}\frac{|V_{k}|}{k},\dots, \sum\limits_{k=1}^{r-1}\frac{|V_{k}|}{k}\Big]$ with $e(S'')= \lfloor \beta n^{2}\rfloor$, and furthermore, contains all the other edges incident to any vertex of $V(G'')\setminus S''$.
    
    Similar to Case 1, we have $|S''|=\alpha n$, $G''\supseteq S''$, $e(S'')=\lfloor \beta n^{2}\rfloor$, and
    {\small
    \begin{align*}
        e(G)\leq e(G')&= e(S)+e(R')+e(S,R')\\
        &=e(S)+e(R')+\sum_{i=1}^{r-1}e(V_{i},R')\\
        &\leq \lfloor \beta n^{2}\rfloor+\sum_{1\leq s<t\leq r-1}a_{i}a_{j}+\sum_{i=1}^{r-1}\frac{|V_{i}|}{i}(i(a_{1}+\dots+a_{r-1})-(a_{r-i}+\dots+a_{r-1})) \\
        &=e(G'').
    \end{align*}
    }
    So we have $e(G'')\geq e(G)$ as desired.
    This completes the proof of Case 2.
    \end{proof*}

    We have proved in~\cref{claim:r-partite graphs} that there is an $(r-1)$-partite graph in $\mathcal{G}$. This allows us to restrict our consideration to the subclass $\mathcal{G}^*\subseteq \mathcal{G}$ which consists of all $n$-vertex $(r-1)$-partite graphs $H$.
    For integers $c_1, \ldots, c_{r-1}$ such that $c_{1}+\dots+c_{r-1}=n$ and $c_{1}\geq c_{2}\geq \dots\geq c_{r-1}$, assume that $H\subseteq K_{r-1}[c_{1},\dots,c_{r-1}]$ with $V(H)=V(K_{r-1}[c_{1},\dots,c_{r-1}])$, and, let $H_{1}, \ldots,$ and $H_{r-1}$ be the corresponding $r-1$ parts of $V(H)$. We further assume there is a set $S$ of $\alpha n$ vertices in $H$ spanning at most $\lfloor \beta n^{2}\rfloor$ edges, and $H$ contains all the other edges in $K_{r-1}[c_{1},\dots,c_{r-1}]$ incident to any vertex of $V(H)\setminus S$. We shall find a suitable $H$ in $\mathcal{G}^*$ with some nested structure below.

   \begin{claim}\label{claim:well-behaved H in G*}
    There is an $(r-1)$-partite graph $H$ in $\mathcal{G}^*$ containing a set $S$ of $\alpha n$ vertices that spans at most $\lfloor \beta n^{2}\rfloor$ edges and satisfies that there exists an integer $s\in [r-1]$ such that 
    \begin{enumerate}[label=(\roman*)]
    \setlength{\itemsep}{0em}
        \item\label{H1} $\bigcup\limits_{i=1}^{s-1} H_i \subsetneq S\subseteq \bigcup\limits_{i=1}^s H_i$,
        \item\label{H2} for $i,j\in [r-1]\setminus [s]$, $\left||H_i| - |H_j|\right|\leq 1$,
        \item\label{H3} for $i,j\in [s-1]$, $\left||H_i| - |H_j|\right|\leq 1$.
    \end{enumerate}
   \end{claim}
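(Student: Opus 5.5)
Starting from an $(r-1)$-partite graph $H\in\mathcal{G}^*$ supplied by \cref{claim:r-partite graphs}, I will apply a sequence of local exchanges to $H$ and $S$. Each exchange keeps $|S|=\alpha n$, keeps $e(H[S])\le\lfloor\beta n^2\rfloor$, does not decrease $e(H)$, and keeps $H$ a spanning subgraph of some complete $(r-1)$-partite graph in which all edges meeting $V(H)\setminus S$ are present. Since $H$ is edge-maximal in $\mathcal{G}$, none of these moves can strictly increase $e(H)$. So each move stays inside $\mathcal{G}^*$, and we may choose, among all members of $\mathcal{G}^*$, a pair $(H,S)$ that optimizes a potential. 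Concretely, I take $(H,S)$ lexicographically maximizing $\sum_i |S\cap H_i|^2$ and then minimizing $\sum_i |H_i|^2$. I will then show that if any of \ref{H1}--\ref{H3} fails, some move improves this potential, which is a contradiction.

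\textbf{Step 1 (nesting, \ref{H1}).} Order the parts by $|S\cap H_i|$, and suppose two parts $H_i,H_j$ both contain vertices outside $S$ while both meet $S$. I move a vertex $v\in S\cap H_j$ out of $S$ and put a vertex $w\in H_i\setminus S$ into $S$, where $|S\cap H_i|\ge|S\cap H_j|$. Inside $S$, the edges can then be redistributed by re-choosing the graph on $S$, for example by using Case 2 of the previous claim to fill in a sub-$(r-1)$-partite graph with exactly $\lfloor\beta n^2\rfloor$ edges. Edges leaving $S$ are complete, so the total edge count changes only by the difference between the degrees into $V\setminus S$. This difference is nonnegative after reordering, and the move increases $\sum|S\cap H_i|^2$. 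Iterating, at most one part is partially in $S$, and the parts fully contained in $S$ are the largest ones. After relabeling by size, this gives $\bigcup_{i<s}H_i\subseteq S\subseteq\bigcup_{i\le s}H_i$. Strictness holds because $S=\bigcup_{i<s}H_i$ can be re-indexed as $s-1$ full parts plus a partial part of zero size by shifting $s$. If that is degenerate, I use $\beta\neq0$: an $S$ that is a union of full parts with few edges contradicts \cref{thm-upper bound on Ks-independence number}.

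\textbf{Step 2 (balancing, \ref{H2} and \ref{H3}).} For parts $H_i,H_j$ with $i,j>s$, which are disjoint from $S$, moving one vertex from the larger part to the smaller one changes $e(H)$ by $|H_i|-|H_j|-1>0$ whenever the sizes differ by at least $2$. This is Zykov-type balancing, and it leaves $S$ untouched, giving \ref{H2}. For parts $H_i,H_j$ with $i,j<s$, which lie inside $S$, balancing increases the complete $(s-1)$-partite edge count inside $S$. To compensate, I delete edges inside $S$ so that $e(S)$ stays at most $\lfloor\beta n^2\rfloor$. The edges from these parts to $V\setminus S$ depend only on $|H_i|+|H_j|$, so the total $e(H)$ is unchanged while $\sum|H_i|^2$ strictly decreases. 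This contradicts the choice of $(H,S)$ and gives \ref{H3}.

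\textbf{Main obstacle.} I expect the hardest part to be Step 1: controlling $e(H[S])$ while moving vertices across the boundary of $S$, because the internal graph of $S$ need not be complete multipartite. My plan is to use the freedom from \cref{claim:r-partite graphs} Case 2 to replace $H[S]$ by any subgraph of the induced complete multipartite graph with the right edge count. The exchange argument then reduces to comparing outside degrees $n-|H_i|$, which favours putting the largest parts into $S$.
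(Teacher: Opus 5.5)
\textbf{Gap in Step~2 (the choice of potential).} Your mechanism is the paper's: pass to the complete $(r-1)$-partite closure $\widehat H$, keep only as many edges inside a sparsest $\alpha n$-set as allowed, then balance parts Zykov-style. The problem is your lexicographic potential, which makes Step~2 fail for (iii).

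Take $H_i,H_j\subseteq S$ with $|H_i|=a$ and $|H_j|=b\ge a+2$. Then $|S\cap H_i|=a$ and $|S\cap H_j|=b$. So the balancing move changes your \emph{primary} quantity $\sum_k|S\cap H_k|^2$ by $(a+1)^2+(b-1)^2-a^2-b^2=-2(b-a-1)<0$. The balanced pair is lexicographically worse, and the decrease of $\sum_k|H_k|^2$ is never consulted, so there is no contradiction.

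This is not a bookkeeping slip. Every vertex of $H_i\cup H_j$ is adjacent to all of $V\setminus S$, so the reverse (unbalancing) move leaves $e(S,V\setminus S)+e(V\setminus S)$ unchanged. It also keeps $e(H)$ unchanged as long as the slack $e_{\widehat H}(S)-e_H(S)$ is at least $b-a+1$. Hence, whenever edges inside $S$ had to be deleted, your optimizer will have the parts inside $S$ pushed apart, and (iii) genuinely fails for it.

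The repair is either of the following:
\begin{itemize}
\item Reverse the priorities and minimize $\sum_k|H_k|^2$ first. The exchanges of Step~1 do not change part sizes, and the moves for (ii) strictly increase $e(H)$ by $b-a-1\ge 1$ anyway.
\item Argue sequentially, as the paper does. Fix the part sizes, take $S$ to be a sparsest set of $\widehat H$, and only then balance. Each balancing move brings two sizes toward each other, so it preserves the size order and (i).
\end{itemize}

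\textbf{Gap in Step~1 (full parts need not be the largest).} Swaps between two partially used parts cannot show that the parts lying fully inside $S$ are the \emph{largest} ones. Condition (i) requires this, because $H_1,\dots,H_{r-1}$ are indexed by decreasing size. For example, take $S=H_a$ with $|H_b|>|H_a|$ and $S\cap H_b=\emptyset$. This $S$ admits none of your swaps, and it already maximizes $\sum_k|S\cap H_k|^2$.

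What you need is a global fact: the greedy set $S_{\widehat H}$ (fill the largest parts first) minimizes $e_{\widehat H}(\cdot)$ over all $\alpha n$-sets. Combine it with the deficit comparison $e_{\widehat H}(S_{\widehat H})-\lfloor\beta n^2\rfloor\le e_{\widehat H}(S_H)-e_H(S_H)$. This shows that $\widehat H$ with $\min\{e_{\widehat H}(S_{\widehat H}),\lfloor\beta n^2\rfloor\}$ edges kept inside $S_{\widehat H}$ has at least $e(H)$ edges. This is exactly how the paper obtains (i).

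Incidentally, the quantity controlling a single swap is $e_{\widehat H}(S)-e_{\widehat H}(S')=|S\cap H_i|-|S\cap H_j|+1$. That is a comparison of degrees into $S$, not into $V\setminus S$.
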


    \begin{proof*}[Proof of~\cref*{claim:well-behaved H in G*}]
    For any $(r-1)$-partite graph $H \in \mathcal{G}^*$, let $\widehat{H}$ be  the complete $(r-1)$-partite graph whose parts coincide with those of $H$, and let $S_{\widehat{H}}$ be a sparsest $\alpha n$-vertex subset of $\widehat{H}$. Let $H_1, \dots, H_{r-1}$ denote the vertex parts of $H$ and $|H_1|\geq |H_2|\geq \dots\geq |H_{r-1}|$.

    We first claim that for any $i\in [r-1]$ except at most one such index, either $H_i\subseteq S_{\widehat{H}}$ or $S_{\widehat{H}}\cap H_i=\emptyset$. Otherwise, there exist distinct $i,j$ satisfying both $0<|H_i\cap S_{\widehat{H}}|<|H_i|$ and $0<|H_j\cap S_{\widehat{H}}|<|H_j|$. Assume $i<j$ and thus $|H_i|\geq |H_j|$. Then in $S_{\widehat{H}}$ we replace an $\ell$-vertex subset of $H_j \cap S_{\widehat{H}}$ by an $\ell$-vertex subset of $H_i \setminus S_{\widehat{H}}$, where $0<\ell = \min\{ |H_i \setminus S_{\widehat{H}}|,|H_j \cap S_{\widehat{H}}|\}$. We obtain an $\alpha n$-vertex subset of $\widehat{H}$ that is sparser than $S_{\widehat{H}}$, a contradiction. Thus, for every $\widehat{H}$, the sparsest $\alpha n$-vertex subset $S_{\widehat{H}}$ contains as many entire parts as possible, and after justifying the index of $H_i$'s if necessary, there is $s \in [r-1]$ such that $\bigcup_{i=1}^{s-1} H_i \subsetneq S_{\widehat{H}} \subseteq \bigcup_{i=1}^{s} H_i$.
    
    If $e_{\widehat{H}}(S_{\widehat{H}})\leq \lfloor \beta n^2 \rfloor$, then as $H\subset \widehat{H}$, $e(H)\leq e(\widehat{H})$ and thus $\widehat{H}\in \mathcal{G}^*$. So $\widehat{H}$ and $S_{\widehat{H}}$ satisfy \ref{H1}. If $e_{\widehat{H}}(S_{\widehat{H}})>\lfloor \beta n^2 \rfloor$, then let $H'$ denote a graph obtained from $\widehat{H}$ by deleting $e_{\widehat{H}}(S_{\widehat{H}})-\lfloor \beta n^2 \rfloor$ edges from $E(S_{\widehat{H}})$. Note that $H'\subseteq \widehat{H}$ and $H'$ contains an $\alpha n$-vertex subset $S_{H'}$ (the same vertex set as $S_{\widehat{H}}$) with $e_{H'}(S_{H'})=\lfloor \beta n^2 \rfloor$. Moreover, we consider a sparsest $\alpha n$-vertex subset $S_H$ of the original graph $H$ (which can be also regarded as a vertex subset of $\widehat{H}$) and thus $e_{H}(S_H)\leq \lfloor \beta n^2 \rfloor$. Let $H^*$ denote a graph obtained from $\widehat{H}$ by deleting $e_{\widehat{H}}(S_H)-e_{H}(S_H)$ edges from $E(S_H)$. Since $S_{\widehat{H}}$ is a sparsest subset of the complete $(r-1)$-partite graph $\widehat{H}$, $e_{\widehat{H}}(S_{\widehat{H}})-\lfloor \beta n^2 \rfloor\leq e_{\widehat{H}}(S_{H})-\lfloor \beta n^2 \rfloor \leq e_{\widehat{H}}(S_{H})-e_{H}(S_H)$, and thus $e(H)\leq e(H^*)\leq e(H')$. Therefore, $H' \in \mathcal{G}^*$ and for some $s \in [r-1]$, $\bigcup_{i=1}^{s-1} H'_i \subsetneq S_{H'} \subseteq \bigcup_{i=1}^{s} H'_i$, where $H'_i$ corresponds to the part $H_i$ when restricted to $H'$. So $H'$ and $S_{H'}$ satisfy~\ref{H1}.

    Next we consider $H'\in \mathcal{G}^*$ and its sparse $\alpha n$-vertex subset $S_{H'}$. For distinct $i,j$ such that $i,j\in [r-1]\setminus [s]$, assume that $|H'_i|<|H'_j|$. We apply the following operation: delete a set $X\subseteq H'_j$ of size $\left\lfloor\frac{|H'_j|-|H'_i|}{2}\right\rfloor$, and add a set $Y$ of the same size to $H'_i$ (while establishing a one-to-one correspondence between the elements of $X$ and $Y$), and for each $y \in Y$ and its corresponding element $x \in X$, let $y$ be adjacent to $|N_{H'}(x) \cap H'_i|$ vertices in $H'_j \setminus X$ and to all other neighbors of $x$ in $V(H') \setminus (H'_i \cup H'_j)$.  This yields a new $(r-1)$-partite graph $H''$ satisfying $e(H'') \ge e(H')$ and $\left||H''_i| - |H''_j|\right|\leq 1$. Consequently, $\left||H''_i| - |H''_j|\right|\leq 1$ for all $i,j \in [r-1]\setminus[s]$. Furthermore, $H''$ and the set $S_{H''}$ (which coincides with $S_{H'}$) satisfy~\ref{H1}. Similarly, by repeatedly applying the same operation we may ensure that $\left||H''_i| - |H''_j|\right|\leq1$ for every distinct $i,j \in [s-1]$.  Since this operation does not increase the number of edges induced by $S_{H''}$, the sparsity of the selected subset is preserved. Hence, the resulting $H''$ and $S_{H''}$ satisfy Conditions~\ref{H2} and~\ref{H3}.
    \end{proof*}
    
   From now on, we consider a graph $H \in \mathcal{G}^*$ and its sparse $\alpha n$-vertex subset $S$ that satisfy all the conditions in \cref{claim:well-behaved H in G*}. We first complete two simple cases based on the range of $\alpha$.
   \begin{claim}\label{claim:case-alpha}
    The statement of~\cref{thm-inverse problem of local density} holds when $0\leq\alpha\leq \frac{1}{r-1}+d(2)$ and $\frac{r-2}{r-1}+d(r-1)\leq\alpha\leq 1$. Moreover, $s\notin\{1,r-1\}$.
   \end{claim}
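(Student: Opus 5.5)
The two ranges of $\alpha$ and the exclusion of $s\in\{1,r-1\}$ are handled separately, always with $t=2$.

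\emph{Small $\alpha$.} Here $0\leq\alpha\leq\frac{1}{r-1}+d(2)$ and $\beta(r,2,\alpha,\gamma)=0$, so the required bound $\lfloor \beta n^2\rfloor=0$ holds trivially. By the reduction in \cref{sec:contribution}, the case $t\ge 3$ then also has $\beta=0$.

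\emph{Large $\alpha$.} Here $\frac{r-2}{r-1}+d(r-1)\leq\alpha\leq 1$, and I would argue directly rather than through \cref{claim:well-behaved H in G*}. Let $U$ be an $\alpha n$-set and $W=V(G)\setminus U$, so $|W|=(1-\alpha)n$. Then
\[
e(G)=e(U)+e(U,W)+e(W).
\]
The goal is $e(U,W)+e(W)\leq (1-\alpha)\bigl(\tfrac{r-2}{r-1}+d(r-1)\bigr)n^2$.
\begin{itemize}
\item Since $|W|\le (\frac{1}{r-1}-d(r-1))n$ is small, the trivial bound gives $e(U,W)+e(W)\le |W|\,n-\binom{|W|}{2}$, roughly $(1-\alpha)n^2-\tfrac12(1-\alpha)^2n^2$. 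This is too weak by itself.
\item Instead, apply \cref{thm-upper bound on Ks-independence number} with $s=r-1$ to bound the degree sum over $W$. The sum $\sum_{w\in W}d(w)$ counts $e(U,W)+2e(W)$. Each neighbourhood $N(w)$ is $K_{r-1}$-free, so \cref{thm-upper bound on Ks-independence number} gives $d(w)\le(\frac{r-2}{r-1}+d(r-1))n$.
\item Hence $e(U,W)+e(W)\le\sum_{w\in W}d(w)\le (1-\alpha)\bigl(\frac{r-2}{r-1}+d(r-1)\bigr)n^2$.
\end{itemize}
Therefore $e(U)\ge \gamma n^2-(1-\alpha)(\frac{r-2}{r-1}+d(r-1))n^2=\beta n^2$, and since $e(U)$ is an integer, $e(U)\ge\lfloor\beta n^2\rfloor$.

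The main subtlety is that \cref{thm-upper bound on Ks-independence number} applies to $G$ itself. Its hypothesis, $e(G)\ge\gamma n^2$, holds for $G$, and $N(w)$ is an induced $K_{r-1}$-free subgraph of $G$, so the bound on the $K_{r-1}$-independence number applies to it. For $t\ge3$, the result again follows from Reiher's theorem via the earlier reduction.

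\emph{Excluding $s\in\{1,r-1\}$.} Consider $H$ and $S$ from \cref{claim:well-behaved H in G*}.
\begin{itemize}
\item If $s=1$, then $S\subseteq H_1$, so $S$ is independent and $\alpha n\le |H_1|$. By \cref{thm-upper bound on Ks-independence number} with $s=2$, this forces $\alpha\le\frac1{r-1}+d(2)$, which is the already settled case with $\beta=0$.
\item If $s=r-1$, then $S$ contains $H_1\cup\dots\cup H_{r-2}$, so the complement $W$ lies inside the single part $H_{r-1}$. Thus $\bigcup_{i\le r-2}H_i$ is a $K_{r-1}$-free set of size at least $(1-|H_{r-1}|/n)n$. \cref{thm-upper bound on Ks-independence number} with $s=r-1$ then gives $|W|\ge(\frac1{r-1}-d(r-1))n$, i.e.\ $\alpha\ge\frac{r-2}{r-1}+d(r-1)$, which is again handled by the large-$\alpha$ case.
\end{itemize}
For $s=r-1$ some care is needed, because \cref{thm-upper bound on Ks-independence number} needs $e(H)\ge\gamma n^2$. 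We may assume this, since otherwise the desired contrapositive $e\le\gamma n^2$ already holds.

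Overall, I expect this bookkeeping of the boundary cases, rather than any computation, to be the main obstacle.
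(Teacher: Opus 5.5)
Your treatment of the two ranges of $\alpha$ and of the case $s=1$ is correct and essentially follows the paper. In particular, bounding $\sum_{w\in W}d(w)$ through the $K_{r-1}$-free neighbourhoods is exactly the paper's large-$\alpha$ step. Applying it directly to $G$ rather than to the extremal graph $H$ is, if anything, cleaner.

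The exclusion of $s=r-1$, however, does not work.
\begin{itemize}
\item Applying \cref{thm-upper bound on Ks-independence number} with $s=r-1$ to the $K_{r-1}$-free set $\bigcup_{i\le r-2}H_i$ gives $n-|H_{r-1}|\le(\frac{r-2}{r-1}+d(r-1))n$. This is a \emph{lower} bound $|H_{r-1}|\ge(\frac{1}{r-1}-d(r-1))n$.
\item Since $W\subseteq H_{r-1}$ only gives $|W|\le|H_{r-1}|$, nothing about $|W|$ follows.
\item Even $|W|\ge(\frac{1}{r-1}-d(r-1))n$ would translate into $\alpha\le\frac{r-2}{r-1}+d(r-1)$, the opposite of what you need.
\end{itemize}
The contrast with $s=1$ shows why. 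There $S$ lies \emph{inside} the set the theorem bounds, so an upper bound transfers to $\alpha$. Here $S$ \emph{contains} that set, so the theorem can never give a lower bound on $\alpha$.

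In fact no argument that ignores $e(S)\le\lfloor\beta n^2\rfloor$ can succeed. Take $\gamma<\frac{r-2}{2(r-1)}$, so $d(r-1)>0$, and the Tur\'an graph $T_{r-1}(n)$ with $S=H_1\cup\dots\cup H_{r-2}$ plus one vertex of $H_{r-1}$. This has $s=r-1$ and at least $\gamma n^2$ edges for large $n$, yet $\alpha\approx\frac{r-2}{r-1}<\frac{r-2}{r-1}+d(r-1)$.

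The missing ingredient is an edge count that uses the sparsity of $S$. Since $S^c\subseteq H_{r-1}$ is independent and $\Delta(H)\le\sum_{i\le r-2}|H_i|\le|S|$, one has $e(H)\le e(S)+|S^c|\,\Delta(H)\le\beta n^2+(1-\alpha)n\,\Delta(H)$. The paper combines this with $e(H)\ge\gamma n^2$ to force $\Delta(H)\ge(\frac{r-2}{r-1}+d(r-1))n$. Then $\alpha n\ge\Delta(H)$ places $\alpha$ in the large range.

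When you carry this out, compare with $\gamma$ using the value of $\beta$ in force for the given $\alpha$, together with $\Delta(H)\le\alpha n$. For example, on the sub-range where $\beta=\gamma-\alpha(1-\alpha)$ this gives $e(H)\le\gamma n^2$ exactly.
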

   \begin{proof*}[Proof of~\cref*{claim:case-alpha}]
   If $0\leq\alpha\leq \frac{1}{r-1}+d(2)$, then it is straightforward to see that $\beta=0$ by~\cref{thm-upper bound on Ks-independence number}. We now prove that the case when $\frac{r-2}{r-1}+d(r-1)\leq\alpha\leq 1$ and $\beta=\gamma-(1-\alpha)(\frac{r-2}{r-1}+d(r-1))$. If $\Delta(H)>(\frac{r-2}{r-1}+d(r-1))n$, then by~\cref{thm-upper bound on Ks-independence number} we have $e(G)<\gamma n^2$ as desired. So we may assume $\Delta(H)\leq (\frac{r-2}{r-1}+d(r-1))n$. Since $e(S^{c})+e(S^{c},S)\leq \sum\limits_{w\in S^{c}}d(w)\leq (1-\alpha)(\frac{r-2}{r-1}+d(r-1))n^{2}$, the value of $\beta$ is obtained.

   We then prove the moreover part. When $s=1$, $S\subseteq H_1$, and hence $S$ is independent. By~\cref{thm-upper bound on Ks-independence number}, we may assume $|S|=\alpha n\leq (\frac{1}{r-1}+d(2))n$. Thus the case $0\leq\alpha\leq\frac{1}{r-1}+d(2)$ has already been settled. When $s=r-1$, the set $S$ contains $\cup_{i=1}^{r-2}H_i$, the union of the largest $r-2$ parts of $G$. So $|S|\geq \sum_{i=1}^{r-2}|H_i|\geq \Delta(H)$. Since we may assume that $e(H)\geq\gamma n^2$, \cref{thm-upper bound on Ks-independence number}, together with the assumption $e(S)\leq \beta n^2$, implies that $\Delta(H)\geq (\frac{r-2}{r-1}+d(r-1))n$. Indeed, otherwise we would have $e(H)\leq e(S)+|S^c|\Delta(H)\leq \gamma n^2$ as desired. Consequently, $|S|=\alpha n\geq (\frac{r-2}{r-1}+d(r-1))n$. The case $\frac{r-2}{r-1}+d(r-1) \leq\alpha\leq1$ has already been proved above.
   \end{proof*}

   Let $x_{1} n:= \alpha n-(|H_{1}|+\dots+|H_{s-1}|)$ and $x_{2} n:=|H_{s}|-x_{1}n$. We have that
    \begin{align*}
    e(H)&=e(S)+e(S,S^{c})+e(S^{c})\\
        &\leq \beta n^{2}+x_{2}(1-x_{1}-x_{2})n^{2}+\alpha(1-\alpha-x_{2})n^{2}+\frac{r-s-2}{2(r-s-1)}(1-\alpha-x_{2})^{2}n^{2}\\
        &=:f(x_{1},x_{2},s) n^{2}.
    \end{align*}
    
   It suffices to prove that $f(x_1,x_2,s)\leq \gamma$ for all real numbers $x_1$ and $x_2$ satisfying the following constraints:
    \begin{equation}\label{ineq-conditions of x1 and x2}
        \left\{\begin{array}{lll}
              x_{1}+x_{2}&\geq &\dfrac{1-\alpha-x_{2}}{r-s-1}, \\[8pt]
              x_{1}+x_{2}&\leq & \dfrac{\alpha-x_{1}}{s-1}, \\[8pt]
              \alpha-x_{1}&\leq & \dfrac{s-1}{r-1}+d(s).
        \end{array}
        \right.
    \end{equation}
    Here, the first inequality is equivalent to the condition $|H_{s}|\geq \frac{1}{r-s-1}(|H_{s+1}|+\dots+|H_{r-1}|)$; the second inequality says that $|H_{s}|\leq \frac{1}{s-1}(|H_{1}|+\dots+|H_{s-1}|)$; the third inequality reflects the fact that the maximum $K_{s}$-free subgraph in $H$ has size at most $(\frac{s-1}{r-1}+d(s))n$, as otherwise we have $e(H)< \gamma n^{2}$ by~\cref{thm-upper bound on Ks-independence number}. See~\cref{fig:calculation} for an explanation. We remark that the bounds in the first two conditions of Constraints~\eqref{ineq-conditions of x1 and x2} are valid only under the assumption that $s\notin\{1,r-1\}$, which is already guaranteed by~\cref{claim:case-alpha}.

\begin{figure}[h]
\centering
\begin{tikzpicture}[
    scale=0.42,
    cluster/.style={
        ellipse,
        draw=black,
        line width=0.9pt,
        minimum width=5cm,
        minimum height=4cm,
        inner sep=0pt,
        transform shape
    },
    smallcluster/.style={
        circle,
        draw=black,
        line width=0.9pt,
        minimum size=3cm,
        inner sep=0pt,
        transform shape
    },
    thick edge/.style={
        ultra thick,
        line cap=round,
        shorten <=2pt,
        shorten >=2pt
    },
    split/.style={
        purple,
        thick
    }
]

\node[cluster] (H1) at (-6,3.5) {};
\node[cluster] (H2) at (-6,-4) {};
\node[cluster] (Hs) at (0,-0.5) {};
\node[smallcluster] (Hr) at (6.5,-0.5) {};

\node at (-6,3.5) {$H_{1}$};
\node at (-6,-4) {$H_{2}$};
\node at (1,3) {$H_{s}$};
\node at (6.5,-0.5) {$H_{r-1}$};
\node at (-1.05,-0.5) {$x_{1}n$};
\node at (1.45,-0.5) {$x_{2}n$};

\draw[split] (-0.2,1.43) arc[start angle=40, end angle=-40, radius=3];

\draw[thick edge] (H1.south east) -- (Hs.north west);
\draw[thick edge] (H2.north east) -- (Hs.south west);
\draw[thick edge] (H1.south) -- (H2.north);
\draw[thick edge] (H1.east) -- (Hr.north west);
\draw[thick edge] (H2.east) -- (Hr.south west);
\draw[thick edge] (Hs.east) -- (Hr.west);
\end{tikzpicture}
    \caption{The structure of $H$}
    \label{fig:calculation}
    \end{figure}
    
    We now establish some basic properties of $f(x_1,x_2,s)$.

    \begin{claim}\label{claim:f}
    The following statements hold.
    \begin{enumerate}[label=(\roman*)]
    \setlength{\itemsep}{0em}
        \item\label{f-property1} $f(x_{1},x_{2},s)$ is monotonically increasing as $x_{1}$ decreases;
        \item\label{f-property2} $f(x_{1},x_{2},s)$ is monotonically increasing as $x_{2}$ decreases;
        \item\label{f-property3} If $x_{1}+x_{2}=\frac{1-\alpha-x_2}{r-s-1}$ and $x_1\leq \frac{1-\alpha}{r-s-1}$, then $f(x_{1},x_{2},s)$ is monotonically increasing when $x_{1}$ decreases.
    \end{enumerate}
    \end{claim}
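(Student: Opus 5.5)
The plan is to compute the partial derivatives of $f$ directly; $f$ is an explicit quadratic polynomial in $x_1,x_2$. Recall
\[
f(x_1,x_2,s)=\beta+x_2(1-x_1-x_2)+\alpha(1-\alpha-x_2)+c\,(1-\alpha-x_2)^2,\qquad c:=\frac{r-s-2}{2(r-s-1)}.
\]
This is well defined because $s\notin\{1,r-1\}$ by \cref{claim:case-alpha}. Here $\beta$ is a constant independent of $x_1,x_2$. Throughout, $x_2 n=|H_s|-x_1 n=|H_s\setminus S|\ge 0$, so $x_2\ge 0$ in the relevant region. I read ``monotonically increasing as $x_i$ decreases'' as the statement that the corresponding derivative is $\le 0$ on the region defined by Constraints~\eqref{ineq-conditions of x1 and x2}.

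For \ref{f-property1}, I will note that $\partial f/\partial x_1=-x_2\le 0$. This is immediate from $x_2\ge 0$.

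For \ref{f-property2}, I will compute
\[
\frac{\partial f}{\partial x_2}=1-x_1-2x_2-\alpha-2c(1-\alpha-x_2).
\]
The key simplification is $2c=1-\frac{1}{r-s-1}$. Substituting it, the terms $(1-\alpha-x_2)$ cancel, and I expect to obtain
\[
\frac{\partial f}{\partial x_2}=-(x_1+x_2)+\frac{1-\alpha-x_2}{r-s-1}.
\]
This is $\le 0$ exactly by the first inequality of Constraints~\eqref{ineq-conditions of x1 and x2}.

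For \ref{f-property3}, I will restrict $f$ to the line $x_1+x_2=\frac{1-\alpha-x_2}{r-s-1}$ and parametrize it by $x_1$:
\[
x_2=\frac{1-\alpha-(r-s-1)x_1}{r-s}.
\]
By the chain rule, the derivative along the line is $\partial_1 f+\partial_2 f\cdot \frac{dx_2}{dx_1}$. The computation in \ref{f-property2} shows that $\partial_2 f$ vanishes identically on this line. Hence the derivative along the line equals $\partial_1 f=-x_2$. Finally, the hypothesis $x_1\le\frac{1-\alpha}{r-s-1}$ is exactly equivalent to $x_2\ge 0$ on the line, so the derivative is $\le 0$ and \ref{f-property3} follows.

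The only real obstacle is the algebraic simplification in \ref{f-property2}, where the constant $c$ must combine to make the derivative match the first constraint. Everything else is routine once that identity is established.
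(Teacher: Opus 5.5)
Your proposal is correct and follows the paper's proof. Parts (i) and (ii) are the same direct partial-derivative computations, including the simplification of $\partial f/\partial x_2$ to $-(x_1+x_2)+\frac{1-\alpha-x_2}{r-s-1}$; for (iii), the paper substitutes the line and differentiates explicitly to get $\frac{r-s-1}{r-s}x_1-\frac{1-\alpha}{r-s}$, which equals your $-x_2$, and your observation that $\partial f/\partial x_2$ vanishes on that line is a cleaner way to reach the same expression.
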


    \begin{proof*}[Proof of~\cref*{claim:f}]
    Note that~\ref{f-property1} is clear. For~\ref{f-property2}, we consider the partial derivative of $f(x_{1},x_{2},s)$. We have that
    \[
    \frac{\partial f}{\partial x_{2}}=1-x_{1}-2x_{2}-\alpha+\frac{r-s-2}{r-s-1}(x_{2}+\alpha-1)=-(x_{1}+x_{2})+\frac{1-\alpha-x_{2}}{r-s-1}\leq 0,
    \]
    where the last inequality follows from the first constraint of \eqref{ineq-conditions of x1 and x2}. This proves~\ref{f-property2}.
    
    For \ref{f-property3}, we consider the partial derivative of $f(x_{1},x_{2},s)$. We have that
    {\small   
    \begin{align*}
    \frac{\partial f}{\partial x_{1}}&=-\frac{r-s-1}{r-s}(1-x_1-x_2)-x_{2}(1-\frac{r-s-1}{r-s})+\frac{r-s-1}{r-s}\alpha-\frac{r-s-2}{r-s}(x_{2}+\alpha-1)\\
    &=\frac{r-s-1}{r-s}x_{1}-\frac{1-\alpha}{r-s}\leq 0,
    \end{align*}}
    where the last inequality follows from $x_1\leq \frac{1-\alpha}{r-s-1}$. Then we have $f(x_{1},x_{2},s)$ is monotonically increasing as $x_{1}$ decreases.
    \end{proof*}
    
    By~\cref{claim:case-alpha}, we now focus on the remaining cases based on the range of $\alpha$. For each integer $k$ with $2\leq k\leq r-2$, we consider the ranges $\frac{k-1}{r-1}+d(k)<\alpha\leq\frac{k}{r-1}+\frac{r-k-1}{r-k}d(k)$ and $\frac{k}{r-1}+\frac{r-k-1}{r-k}d(k)<\alpha\leq\frac{k}{r-1}+d(k+1)$, respectively. We need the following claim.

    \begin{claim}\label{claim:not intersecting parts}
    For $\frac{k-1}{r-1}+d(k)<\alpha\leq\frac{k}{r-1}+d(k+1)$, the number of parts disjoint from $S$ is at most $r-k-1$, i.e., $s\geq k$.
    \end{claim}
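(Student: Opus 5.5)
The plan is to argue by contradiction, using \cref{thm-upper bound on Ks-independence number} with $s$ replaced by $k$. Fix $H\in\mathcal{G}^*$ and its sparse $\alpha n$-vertex set $S$ as given by \cref{claim:well-behaved H in G*}, with the index $s$ from condition~\ref{H1}.

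First I would translate the statement into a condition on $s$. By~\ref{H1}, $S$ meets every part among $H_1,\dots,H_s$: it contains $H_1,\dots,H_{s-1}$ entirely, and it meets $H_s$ because the first inclusion in~\ref{H1} is strict. Also $S\subseteq \bigcup_{i\le s}H_i$, so $S$ is disjoint from exactly the parts $H_{s+1},\dots,H_{r-1}$. Thus the number of parts disjoint from $S$ is exactly $r-1-s$, and the claim is equivalent to $s\geq k$.

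Now suppose $s\leq k-1$. Then $S\subseteq H_1\cup\dots\cup H_{k-1}$, so $H[S]$ is a subgraph of a complete $(k-1)$-partite graph and hence contains no $K_k$. Consequently the $K_k$-independence number of $H$ is at least $|S|=\alpha n>(\frac{k-1}{r-1}+d(k))n$, where the strict inequality is the assumed lower bound on $\alpha$. Note that $H$ is $K_r$-free, since it is $(r-1)$-partite, and $2\le k\le r$.

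Applying \cref{thm-upper bound on Ks-independence number} with $s=k$ in contrapositive form gives $e(H)<\gamma n^2$. Since $H\in\mathcal{G}$ has the maximum number of edges among $n$-vertex $K_r$-free graphs with an $\alpha n$-subset spanning at most $\lfloor\beta n^2\rfloor$ edges, every such graph has fewer than $\gamma n^2$ edges. This is exactly the contrapositive form of \cref{thm-inverse problem of local density}, so the theorem holds in this situation and we may assume $s\ge k$ from now on. This is the same reduction as in the ``moreover'' part of \cref{claim:case-alpha}.

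The only delicate point I expect is bookkeeping rather than a real obstacle. One must make sure that the reordering of parts performed in \cref{claim:well-behaved H in G*} does not change which parts are disjoint from $S$, so that $S\subseteq H_1\cup\dots\cup H_s$ is the correct containment. One must also check that the strict inequality $\alpha>\frac{k-1}{r-1}+d(k)$ yields a strict excess over the bound in \cref{thm-upper bound on Ks-independence number}, which it does. The upper bound $\alpha\le\frac{k}{r-1}+d(k+1)$ is not needed here; it only locates the range of $\alpha$ for the subsequent case analysis.
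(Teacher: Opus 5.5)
Your proposal is correct and follows the paper's argument. If $s\le k-1$, then a $K_k$-free set of size at least $\alpha n>(\frac{k-1}{r-1}+d(k))n$ exists: you take $S$ itself, while the paper takes the union of the at most $k-1$ parts meeting $S$. \cref{thm-upper bound on Ks-independence number} with $s=k$ then forces $e(H)<\gamma n^2$. The paper states this as a contradiction with the implicit standing assumption $e(H)\ge\gamma n^2$, where you state it as a reduction, which amounts to the same thing.
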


    \begin{proof*}[Proof of~\cref*{claim:not intersecting parts}]
    Otherwise, we suppose that the number of parts disjoint from $S$ is at least $r-k$, which implies that the number of parts intersecting with $S$ is at most $k-1$ parts. But their union has size at least $\alpha n>(\frac{k-1}{r-1}+d(k))n$ by the assumption,  contradicting the upper bound of $K_{s}$-independence number of $H$ shown in~\cref{thm-upper bound on Ks-independence number}. 
    \end{proof*}

    Let $k$ be an integer such that $2\leq k\leq r-2$. To complete the proof, it remains to consider the following two cases.

    \medskip
    \noindent\textbf{Case 1.} \emph{$\frac{k-1}{r-1}+d(k)\leq\alpha\leq \frac{k}{r-1}+\frac{r-k-1}{r-k}d(k)$} \emph{and $\beta=\frac{k-2}{2(k-1)}\left(\frac{k-1}{r-1}+d(k)\right)^{2}+\left(\frac{k-1}{r-1}+d(k)\right)\left(\alpha-\frac{k-1}{r-1}-d(k)\right)$.}
    \medskip
    
    By~\cref{claim:not intersecting parts}, we consider the following two subcases. 
    
    If there are exactly $r-k-1$ parts disjoint from $S$, then by~\cref{claim:f}\ref{f-property2} and Constraints~\eqref{ineq-conditions of x1 and x2} with $s=k$, we can decrease $x_{2}$ until $x_{1}+x_{2}=\frac{1-\alpha-x_{2}}{r-k-1}$, or $x_{2}=0$ and $x_{1}>\frac{1-\alpha-x_{2}}{r-k-1}$.
    If $x_{1}+x_{2}=\frac{1-\alpha-x_{2}}{r-k-1}$, then since $x_{2}\geq 0$, we have $x_{1}\leq \frac{1-\alpha}{r-k-1}$. By~\cref{claim:f}\ref{f-property3}, we can assume that $x_{1}$ is as small as possible with $x_{1}+x_{2}=\frac{1-\alpha-x_{2}}{r-k-1}$. Recalling that $x_{1}\geq \alpha-\frac{k-1}{r-1}-d(k)$, so we have
    \begin{align*}
        f(x_{1},x_{2},s)\leq f(\alpha-\frac{k-1}{r-1}-d(k),\, \frac{k}{r-1}+\frac{r-k-1}{r-k}d(k)-\alpha,\, k)= \gamma.
    \end{align*}
    If $x_{2}=0$ and $x_{1}\geq\frac{1-\alpha-x_{2}}{r-k-1}=\frac{1-\alpha}{r-k-1}$, then by~\cref{claim:f}\ref{f-property1}, we can decrease $x_{1}$ until $x_{1}=\frac{1-\alpha}{r-k-1}$, and now we have that 
    {\small 
    \begin{align}\label{ineq-s=k}
    f(x_{1},x_{2},s)
    &\leq f\left(\frac{1-\alpha}{r-k-1},0,k\right) \notag\\
    &=\beta+\alpha(1-\alpha)+\frac{r-k-2}{2(r-k-1)}(1-\alpha)^{2} \\
    &=\frac{k-2}{2(k-1)}
    \left(\frac{k-1}{r-1}+d(k)\right)^{2}+\left(\frac{k-1}{r-1}+d(k)\right)
    \left(\alpha-\frac{k-1}{r-1}-d(k)\right)
    +\alpha(1-\alpha) \notag\\
    &\quad+\frac{r-k-2}{2(r-k-1)}(1-\alpha)^{2} \notag\\
    &=\gamma+\left(\frac{k-1}{r-1}+d(k)\right)
    \left(\alpha-\frac{k-1}{r-1}-d(k)\right)+\alpha(1-\alpha)
    +\frac{r-k-2}{2(r-k-1)}(1-\alpha)^{2} \notag\\
    &\quad-\left(\frac{k-1}{r-1}+d(k)\right)
    \left(\frac{r-k}{r-1}-d(k)\right) -\frac{r-k-1}{2(r-k)}
    \left(\frac{r-k}{r-1}-d(k)\right)^2
    \notag\\
    &\leq \gamma, \notag
    \end{align}
    }
    where the last inequality follows from the fact that the formula in the second last line is a quadratic polynomial in $\alpha$, which reaches its maximum value $\gamma$ in $\alpha = \frac{k}{r-1} + \frac{r-k-1}{r-k}d(k)$.

    If there are at most $r-k-2$ parts disjoint from $S$, which is equivalent to the case that there are at least $k+1$ parts intersecting with $S$, then now $s\geq k+1$. If $s\geq k+2$, then $S$ contains $\cup_{i=1}^{k+1}H_i$ which is the union of the largest $k+1$ parts of $G$, and thus by~\eqref{ineq-s=k},
    \begin{align*}
        e(G)&\leq e(S)+e(S,S^c)+e(S^c)\\
        &\leq \beta n^2+\alpha(1-\alpha)n^2+\frac{r-k-3}{2(r-k-2)}(1-\alpha)^2n^2\\
        &\leq \beta n^2+\alpha(1-\alpha) n^2+\frac{r-k-2}{2(r-k-1)}(1-\alpha)^{2} n^2\\
        &=f\left(\frac{1-\alpha}{r-k-1},0,k\right)n^2\\
        &\leq \gamma n^2.
    \end{align*} 
    We are done.
    Thus, we assume that $s=k+1$.
    It follows from the condition of $\alpha\leq \frac{k}{r-1}+d(k+1)$ and Constraints~\eqref{ineq-conditions of x1 and x2} that we can decrease $x_{1}$ until $x_{1}+x_{2}=\frac{1-\alpha-x_{2}}{r-k-2}$. Note that during this process, Constraints~\eqref{ineq-conditions of x1 and x2} always hold. And then by~\cref{claim:f}\ref{f-property3} with $x_{1}\leq \frac{1-\alpha}{r-k-2}$, we can fix $x_{1}+x_{2}=\frac{1-\alpha-x_{2}}{r-k-2}$ and decrease $x_{1}$ until $x_{1}=0$. Now we have $x_{2}=\frac{1-\alpha}{r-k-1}$, and
    \[
    f(x_{1},x_{2},s)\leq f(0,\frac{1-\alpha}{r-k-1},k+1)=f(\frac{\alpha}{k},0,k),
    \]
    which goes back to the case where there are exactly $r-k-1$ parts disjoint from $S$, and we complete the proof of Case 1.

    \medskip
    \noindent\textbf{Case 2.} \emph{$\frac{k}{r-1}+\frac{r-k-1}{r-k}d(k)\leq\alpha\leq \frac{k}{r-1}+d(k+1)$ and  $\beta=\gamma-\frac{r-k-2}{2(r-k-1)}(1-\alpha)^{2}-\alpha(1-\alpha).$}

    Again by~\cref{claim:not intersecting parts}, we consider the following two subcases.

    If there are at most $r-k-2$ parts disjoint from $S$, since $\alpha\leq \frac{k}{r-1}+d(k+1)$, then by~\cref{claim:f}\ref{f-property1} and \ref{f-property3} and similar to the former case, we have 
    \[
    f(x_{1},x_{2},k+1)\leq f(0,\frac{1-\alpha}{r-k-1},k+1)\leq \gamma.
    \]
    If there are exactly $r-k-1$ parts that do not intersect with $S$, since $x_{1}\geq \alpha-\frac{k-1}{r-1}-d(k)\geq \frac{1-\alpha}{r-k-1}$, then by~\cref{claim:f}\ref{f-property2} we can decrease $x_{2}$ until $x_{2}=0$ and we have 
    \[
    f(x_{1},x_{2},k)\leq f(x_{1},0,k)\leq \gamma.
    \]
    The last inequality holds since the function $f$ is independent of $x_{1}$ when $x_{2}=0$. We complete the proof of Case 2.
\end{proof}

\section[RegularityLemma]{Local clique density in $H$-free graphs}

In this section, we establish the extension of \cref{thm-inverse problem of local density} to $H$-free graphs. Our proof of~\cref{thm-H-free graph case} uses the Regularity Lemma and we refer the interested reader to the survey \cite{application of regularity lemma}.

We begin with some necessary definitions. Let $G=(V,E)$ be a graph, and let $A, B$ be two disjoint vertex subsets of $V(G)$. For non-empty $A$ and $B$, we define the \emph{density} of edges between $A$ and $B$ by 
\[
d(A,B)=\frac{e(A,B)}{|A||B|}.
\]
For $\varepsilon>0$, we say the pair $(A,B)$ is \emph{$\varepsilon$-regular} if for every $X\subseteq A$ and $Y\subseteq B$ with $|X|>\varepsilon |A|$ and $|Y|>\varepsilon |B|$, we have
\[
|d(X,Y)-d(A,B)|<\varepsilon.
\]
We say a partition $V=V_1\cup \dots\cup V_{k}$ is \emph{balanced} if for all $i,j\in [k]$, $\big||V_i|-|V_j|\big|\leq 1$. A balanced partition $V=V_1\cup \dots\cup V_{k}$ is called \emph{$\varepsilon$-regular} if $|V_i|\leq \varepsilon|V|$ for every $i\in [k]$ and all but at most $\varepsilon k^2$ of the pairs $(V_i,V_j)$ are $\varepsilon$-regular. The above partition is called \emph{totally $\varepsilon$-regular} if all the pairs $(V_i,V_j)$ are $\varepsilon$-regular. The following celebrated lemma was proved by Szemer\'edi~\cite{1978-szemeredi}.
\begin{lemma}[Szemer\'edi Regularity Lemma~\cite{1978-szemeredi}]\label{lem-regularity lemma}
    For every $\varepsilon>0$, there is an integer $M(\varepsilon)$ such that for sufficiently larger $n$ every $n$-vertex graph has an $\varepsilon$-regular balanced partition into $k$ classes, where $\frac{1}{\varepsilon} \leq k\leq M(\varepsilon)$.
\end{lemma}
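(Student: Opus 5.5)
The plan is the standard energy-increment (index) argument. For a partition $\mathcal{P}=\{V_1,\dots,V_k\}$ of $V$ with $|V|=n$, define its index
\[
q(\mathcal{P})=\sum_{i<j}\frac{|V_i||V_j|}{n^2}\,d(V_i,V_j)^2 .
\]
Since every density lies in $[0,1]$, we have $q(\mathcal{P})\le \tfrac12$. By Cauchy--Schwarz (convexity of $x\mapsto x^2$), $q$ does not decrease under refinement. I would first run the argument with a smaller parameter $\varepsilon'=\varepsilon/4$, say, to leave slack for the final rebalancing. Start from an arbitrary balanced partition into $k_0=\lceil 1/\varepsilon\rceil$ classes, with $n$ large enough that each class has size at most $\varepsilon n$.

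The key step is the increment lemma. Suppose a balanced partition with $k$ classes is not $\varepsilon'$-regular. For each irregular pair $(V_i,V_j)$, choose witnesses $X_{ij}\subseteq V_i$ and $Y_{ij}\subseteq V_j$ with $|d(X_{ij},Y_{ij})-d(V_i,V_j)|\ge\varepsilon'$. Each $V_i$ receives at most $k-1$ witness sets, and their Venn diagram splits $V_i$ into at most $2^{k-1}$ atoms. Let $\mathcal{P}'$ be the resulting common refinement. For one irregular pair, the defect form of Cauchy--Schwarz gives an index gain of at least $\varepsilon'^4|V_i||V_j|/n^2$. Summing over more than $\varepsilon' k^2$ irregular pairs, each with $|V_i||V_j|\approx n^2/k^2$, yields $q(\mathcal{P}')\ge q(\mathcal{P})+\varepsilon'^5/2$. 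Because $q\le \tfrac12$, this can happen at most about $\varepsilon'^{-5}$ times.

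The step I expect to be the main obstacle is restoring balance after each refinement, because the atoms have wildly different sizes and the paper's definition allows no exceptional set. My approach is to chop every atom into pieces of a common size $m\approx n/(k4^{k})$, collect the leftovers (at most $m$ per atom, so a $\le 2^{-k}$ fraction of $V$), and spread them as evenly as possible over the pieces. This produces a balanced partition into $k'\le k4^k$ classes. The index loss from moving this small fraction of vertices is $O(2^{-k})$, and since $k\ge 1/\varepsilon$ this can be made at most $\varepsilon'^5/4$ by choosing $k_0$ larger if necessary. Each round therefore still gains $\varepsilon'^5/4$ net.

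After at most $4\varepsilon'^{-5}$ rounds, the procedure must stop at an $\varepsilon'$-regular balanced partition. Its number of classes is bounded by an iterated exponential (a tower) in $\varepsilon^{-1}$ of height $O(\varepsilon^{-5})$, and this bound is $M(\varepsilon)$. The lower bound $k\ge 1/\varepsilon$ holds because classes are only ever refined. The condition $|V_i|\le\varepsilon n$ follows from $k\ge 1/\varepsilon$ and balance, and $\varepsilon'$-regularity implies $\varepsilon$-regularity. ``Sufficiently large $n$'' is needed only so that every class has at least $M(\varepsilon)$ vertices and all the chopping steps make sense.
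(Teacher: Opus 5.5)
The paper gives no proof of this lemma; it quotes it from Szemer\'edi. Your proposal is the standard index (energy-increment) argument from that source, and it is correct.

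Your one real departure is how you meet the paper's exceptional-set-free notion of ``balanced''. The classical argument carries an exceptional class $V_0$ with $|V_0|\le\varepsilon n$. If an exact equipartition is wanted, $V_0$ is redistributed only at the end, which degrades the regularity parameter (typically one must start from something like $\varepsilon^2$). You instead rebalance in every round. This costs nothing in the regularity parameter, because the leftover fraction $2^{-k}$ is negligible against the per-round gain $\varepsilon'^5$.

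That rebalancing step, which you flagged as the main obstacle, still needs its short justification. Write $W_j=P_j\cup L_j$ with $|L_j|\le\eta|P_j|$. Then $e(W_i,W_j)\ge e(P_i,P_j)$ and $|W_i||W_j|\le(1+\eta)^2|P_i||P_j|$. Hence each term $e(W_i,W_j)^2/(n^2|W_i||W_j|)$ is at least $(1+\eta)^{-2}$ times the corresponding piece-pair term. Separately, in the chopped refinement, all pairs involving a leftover set contribute at most $\ell/n$ to $q$. Since $\eta\le 2\ell/n+1/m$, the total index loss is at most $3\ell/n+1/m=O(2^{-k})+1/m$, and $1/m$ is negligible once $n\gg M(\varepsilon)$.

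Two small slips need fixing:
\begin{itemize}
\item With $k_0=\lceil 1/\varepsilon\rceil$, balance only gives $|V_i|\le\lceil n/k_0\rceil$, which can exceed $\varepsilon n$. Take $k_0\ge 2/\varepsilon$ instead.
\item Leftover vertices migrate, so the new partition is not a refinement of the old one. The bound $k'\ge k$ should come from $k'=(n-\ell)/m\ge(1-2^{-k-1})k4^k$, not from ``classes are only ever refined''.
\end{itemize}
Also, the slack $\varepsilon'=\varepsilon/4$ is unnecessary, since no final rebalancing ever occurs.
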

When applying Szemer\'edi Regularity Lemma, we shall consider the corresponding reduced graph and we need the following lemma, which appears as Theorem~2.1 in \cite{application of regularity lemma}.
\begin{lemma}{\em \cite{application of regularity lemma}}\label{lem-embedding lemma}
    For every $\delta>0$ and integers $m$ and $\ell$, there exist $\varepsilon=\varepsilon(\delta,\ell,m)>0$ and $n_0=n_0(\delta,t,m)$ with the following property: If $G$ is an $n$-vertex graph with $n>n_0$ and $(V_1,\dots,V_m)$ is a totally $\varepsilon$-regular partition of $V(G)$ such that $d(V_i,V_j)\geq \delta$ for all $i<j$, then $G$ contains a complete $m$-partite subgraph $K_{m}(\ell)$ (that is, an $\ell$-blow-up of the complete graph $K_m$).
\end{lemma}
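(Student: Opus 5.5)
The plan is the standard greedy embedding argument. It runs on one basic fact about regular pairs. Suppose $(A,B)$ is $\varepsilon$-regular with density $d$, and $Y\subseteq B$ has $|Y|>\varepsilon|B|$. Then fewer than $\varepsilon|A|$ vertices of $A$ have fewer than $(d-\varepsilon)|Y|$ neighbours in $Y$. Indeed, otherwise let $X$ be the set of such vertices; then $|X|\ge\varepsilon|A|$ and $d(X,Y)<d-\varepsilon$, contradicting regularity. (If $|X|=\varepsilon|A|$ exactly, use a slightly smaller $\varepsilon$ in the statement.)

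We embed the $m\ell$ vertices of $K_m(\ell)$ one at a time: first $\ell$ vertices $v_{1,1},\dots,v_{1,\ell}$ in $V_1$, then $\ell$ vertices in $V_2$, and so on. For $j>i$ we maintain candidate sets $C_j\subseteq V_j$, each equal to the common neighbourhood in $V_j$ of all vertices chosen so far. Initially $C_j=V_j$.

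When choosing the next vertex $v$ in $V_i$, we need $v\in C_i$, $v$ not already chosen, and $|N(v)\cap C_j|\ge(\delta-\varepsilon)|C_j|$ for every $j>i$. The induction hypothesis gives $|C_j|\ge(\delta-\varepsilon)^{(i-1)\ell+p}|V_j|$ at every stage, where $p$ is the number of vertices already chosen in $V_i$. As long as this quantity exceeds $\varepsilon|V_j|$, the basic fact applies to each pair $(V_i,V_j)$ with $Y=C_j$. So at most $(m-1)\varepsilon|V_i|$ vertices of $V_i$ are bad for some $j$. We also exclude the at most $\ell$ vertices already chosen in $V_i$. Hence a valid choice exists provided
\[
|C_i|\ \ge\ (\delta-\varepsilon)^{(i-1)\ell}|V_i| \ >\ (m-1)\varepsilon|V_i|+\ell .
\]
After choosing $v$ we replace each $C_j$ by $N(v)\cap C_j$ for $j>i$, which keeps the invariant. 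By construction the chosen vertices form $K_m(\ell)$: every vertex chosen in $V_i$ lies in the common neighbourhood of all earlier chosen vertices in other classes.

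The parameters are fixed as follows. Take $\varepsilon\le\delta/2$, so $\delta-\varepsilon\ge\delta/2$. Then require
\[
\varepsilon<\frac{1}{2m}\Bigl(\frac{\delta}{2}\Bigr)^{m\ell}.
\]
This single inequality ensures both that every $C_j$ stays above the $\varepsilon|V_j|$ threshold needed to invoke regularity, and that $(m-1)\varepsilon|V_i|$ is at most half of $(\delta/2)^{m\ell}|V_i|$. Finally choose $n_0$ so that $(\delta/2)^{m\ell}\,\lfloor n/m\rfloor/2>\ell$, using $|V_i|\ge\lfloor n/m\rfloor$; this absorbs the additive $\ell$ for distinctness.

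The main point requiring care is the bookkeeping of the shrinking candidate sets. Regularity can only be applied to subsets of relative size greater than $\varepsilon$, so $\varepsilon$ must be chosen after $\delta$, $m$, $\ell$ and exponentially small in $m\ell$. Once that dependence is fixed, everything else is routine.
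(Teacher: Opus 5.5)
The paper does not prove this lemma; it quotes it from the Koml\'os--Simonovits survey. Your argument is exactly the standard proof of that embedding lemma and is correct: a greedy vertex-by-vertex embedding, with candidate sets shrinking by a factor $\delta-\varepsilon$, $\varepsilon$ taken exponentially small in $m\ell$, and $n_0$ large enough (using that the partition is balanced) to guarantee distinct choices.

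One fix: you should drop the remark about passing to a ``slightly smaller $\varepsilon$'', since $\varepsilon$-regularity does not imply $\varepsilon'$-regularity for $\varepsilon'<\varepsilon$. It is also unnecessary. The strict condition $|X|>\varepsilon|A|$ in the definition already shows that at most $\varepsilon|A|$ vertices are bad, and your count tolerates ``at most'' without change.
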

Now we are ready to prove \cref{thm-H-free graph case}.
\begin{proof}[Proof of \cref*{thm-H-free graph case}]
    Let $\ell$ be the smallest integer such that $H\subseteq K_{r}(\ell)$. Such an integer exists since $H$ is an $r$-chromatic graph.
    For a small constant $\delta>0$, there are $\varepsilon=\min\{\delta,\varepsilon(\delta,\ell,r)\}$ and $M(\varepsilon)$ such that $\varepsilon(\delta,\ell,r)$ is obtained in~\cref{lem-embedding lemma} and $M(\varepsilon)$ is obtained in~\cref{lem-regularity lemma}. 
    Assume that $n$ is a sufficiently large integer and let $G$ be an $n$-vertex $H$-free graph with at least $\gamma n^2$ edges. By~\cref{lem-regularity lemma}, the graph $G$ admits an $\varepsilon$-regular balanced partition $V(G)=V_1\cup \dots\cup V_k$.

    We first define the reduced graph $R$ with respect to the $\varepsilon$-regular balanced partition $V(G)=V_1\cup \dots\cup V_k$. Set $V(R)=\{v_1,\dots,v_k\}$. For two distinct vertices $v_i$ and $v_j$, $v_i v_j$ is an edge of $R$ if and only if $(V_i,V_j)$ is an $\varepsilon$-regular pair with $d(V_i,V_j)\geq \delta$. Since $G$ is $H$-free, by~\cref{lem-embedding lemma}, $R$ is also $K_r$-free. Otherwise, $R$ contains $K_r$ as an induced subgraph, and it follows from~\cref{lem-embedding lemma} that we can find a $K_{r}(\ell)$ in $G$ and thus find a copy of $H$, which is a contradiction.

    Then we construct a graph $G'$ from $G$ by deleting three parts of edges as follows:
    \begin{enumerate}[label=(\arabic*)]
    \setlength{\itemsep}{0mm}
        \item Delete all edges inside each $V_i$. (This removes at most $\frac{n^2}{k}$ edges.)
        \item Delete all edges between $V_i$ and $V_j$ whenever $(V_i,V_j)$ is not an $\varepsilon$-regular pair. (This removes at most $\varepsilon k^2(\frac{n}{k})^2 =\varepsilon n^2$ edges.)
        \item Delete all edges between $V_i$ and $V_j$ whenever $d(V_i,V_j)<\delta$. (This removes at most $\delta n^2$ edges.)
    \end{enumerate}
    Hence, we have that
    \[
    e(G')\geq e(G)-(\frac{1}{k}+\varepsilon+\delta)n^2\geq e(G)-3\delta n^2\geq (\gamma-3\delta)n^2.
    \]
    We claim that $G'$ is $K_r$-free. Since in $G'$, there are edges between $V_i$ and $V_j$ if and only if $(V_i,V_j)$ is an $\varepsilon$-regular pair with $d(V_i,V_j)\geq \delta$. If $G'$ contains a $K_r$, then $R$ contains $K_r$ as a subgraph by the definition of $R$, a contradiction.

    Therefore, we can apply~\cref{thm-inverse problem of local density} to $G'$. Noting that $e(G')\geq (\gamma-3\delta)n^2$, there is a constant $\beta'=\beta(r,t,\alpha,\gamma-3\delta)$ such that every $\alpha n$ vertices in $G'$ contain at least $\beta' n^t$ copies of $K_t$. Since $\beta(r,t,\alpha,\gamma)$ is continuous in $\gamma$, and since $\delta>0$ is arbitrary, we may let $\delta\to0$ to obtain $\beta'=\beta-o(1)$, as desired.
\end{proof}

\section{Concluding remarks}\label{section-concluding remarks}

Our work provides a complete solution to \cref{prob-local density-inverse problem}, culminating in \cref{thm-inverse problem of local density}. The framework developed here focuses on lower-order cliques. Its extension and applications to $K_{r}$-free graphs for $r \geq 4$ constitute a natural and promising direction for further study. The analysis of the tight examples demonstrates that the highly unbalanced structure is a key limitation of \cref{thm-inverse problem of local density}, and imposing additional structural constraints may lead to significantly broader applications.

As remarked in~\cref{sec:intro}, to obtain a sparse subset of a prescribed size, it suffices to first find a sparse subset whose size is substantially smaller than the desired one by~\cref{thm-inverse problem of local density}. This relaxation leaves a larger remaining subgraph, which may provide more flexibility to find additional sparse subsets. This approach is particularly helpful when studying partition problems or the existence of multiple disjoint sparse subsets of a prescribed size. For example, finding two disjoint sparse subsets of size $\frac{n}{2}$ is equivalent to finding a balanced $2$-partition. Therefore, \cref{thm-inverse problem of local density} may serve as a useful tool for these problems. Related questions have been studied in \cite{2007-sudakov,2024-balogh}.

Very recently, Ma, Wang, Zhu \cite{MWZ2026}, and independently, Li, Liu, and Zhang~\cite{LLZ2026} determined the asymptotically sharp clique-to-clique density function. It would be interesting to extend our result to a local version of the clique-to-clique density theorem in $H$-free graphs.

\section*{Acknowledgments}
Jiaao Li and Xinyuan Li are partially supported by National Key Research and Development Program of China (No. 2022YFA1006400), National Natural Science Foundation of China (No. 12571371), Natural Science Foundation of Tianjin (No. 24JCJQJC00130), and the Fundamental Research Funds for the Central Universities, Nankai University. 
Yan Wang is supported by National Key R\&D Program of China (No. 2022YFA1006400) and National Natural Science Foundation of China (No. 12571376). 
Zhouningxin Wang is supported by National Natural Science Foundation of China (No. 12301444) and the Fundamental Research Funds for the Central Universities, Nankai University.

\end{document}